\documentclass[11pt]{amsart}
\usepackage[lmargin=1in,rmargin=1in,tmargin=1in,bmargin=1in]{geometry}
\usepackage[ps,all,arc,rotate]{xy}
\usepackage{graphicx, float, epstopdf}
\usepackage{bbm}
\usepackage{color}
\usepackage[unicode,bookmarks=false]{hyperref}
\hypersetup{hidelinks}
\usepackage{centernot}
\usepackage{fancyhdr}
\usepackage{multirow}
\usepackage[utf8]{inputenc}
\usepackage{amsfonts,amssymb,amsmath,amsthm,mathrsfs}
\usepackage{graphics, setspace}
\usepackage{braket}
\usepackage{mathtools}
\usepackage{tikz}
\usepackage{upgreek}
\usepackage{xcolor}
\usepackage{array,esint}
\usepackage{booktabs}
\usepackage{vcell}

\numberwithin{equation}{section}
\numberwithin{figure}{section}
\allowdisplaybreaks[4]

\newtheorem{lemma}{Lemma}[section]
\newtheorem{theorem}{Theorem}[section]
\newtheorem{proposition}{Proposition}[section]

\newtheorem{corollary}[lemma]{Corollary}
\theoremstyle{definition}

\newtheorem{remark}{Remark}[section]

\newcommand{\Z}{\mathbb{Z}}
\newcommand{\R}{\mathbb{R}}
\newcommand{\N}{\mathbb{N}}
\newcommand{\e}{\operatorname{e}}
\newcommand{\eps}{\varepsilon}
\newcommand{\Major}{\mathfrak{M}}
\newcommand{\Minor}{\mathfrak{m}}

\begin{document}

\title[Exponential sum over squarefree integers]{On the exponential sum over squarefree integers}

\author[N.~Robles]{Nicolas Robles}
\address{Nicolas Robles: RAND Corporation, Engineering and Applied Sciences,
1200 S Hayes, Arlington, VA 22202, USA}
\email{robles.nicolas.m@gmail.com}

\author[A.~Zaharescu]{Alexandru Zaharescu}
\address{Alexandru Zaharescu: Department of Mathematics, University of
Illinois Urbana-Champaign, Altgeld Hall, 1409 W. Green Street, Urbana, IL,
61801, USA}
\email{zaharesc@illinois.edu}

\author[D.~Zeindler]{Dirk Zeindler}
\address{
Dirk Zeindler: Department of Mathematics and Statistics, Lancaster University, Fylde College, Bailrigg, Lancaster LA1 4YF, United Kingdom}
\email{d.zeindler@lancaster.ac.uk}

\begin{abstract}
Let $\mu$ be the M\"obius function and $\e(t)=e^{2\pi it}$.  We prove
that if $N\ge2$, $\alpha\in\R$, $(a,q)=1$, and $|\alpha-a/q|\le q^{-2}$,
then
\[
  \bigg|\sum_{n\le N}\mu^2(n)\e(\alpha n)\bigg|\ll\left(\frac Nq+q\right)(\log 2N)^5,
\]
with an absolute implied constant, and we deduce the corresponding
estimate on the minor arcs of the Hardy--Littlewood dissection throughout
the range $Q\le N^{1/2}$.  The estimates of Schlage-Puchta \cite{SP} and
of Tolev \cite{Tolev} have the same dependence on $q$ and $Q$ but carry a
factor $N^{\eps}$.  The proof uses Heath-Brown's square sieve with sieving primes confined to
an interval $(P,2P]$, where $P$ may be as small as a multiple of $\log N$;
a finite Fej\'er majorant in place of a truncated Fourier series; and,
after completion of the character sums, a count of representations that
exploits the restriction on the primes in place of the divisor function.
\end{abstract}

\subjclass[2020]{Primary: 11L07. Secondary: 11N36, 11P55}
\keywords{squarefree integers, exponential sums, square sieve, quadratic characters, Diophantine approximation}
\maketitle

\section{Introduction}

Let $\mu$ denote the M\"obius function, so that $\mu^2$ is the indicator
function of the squarefree positive integers, and write $\e(t)=e^{2\pi it}$.
For real $\alpha$ and $N\ge2$ we consider the exponential sum
\begin{equation}\label{eq:S-def}
  S(\alpha)=S(\alpha;N):=\sum_{n\le N}\mu^2(n)\e(\alpha n).
\end{equation}
This sum is the generating function of the squarefree integers in the
Hardy--Littlewood circle method, and it governs the additive problems in
which the summands are squarefree.  If $\nu\in\N$ and
\begin{equation}\label{eq:r-def}
  r_\nu(N):=\sum_{\substack{n_1+\cdots+n_\nu=N\\ n_1,\ldots,n_\nu\ge1}}
  \mu^2(n_1)\cdots\mu^2(n_\nu)
\end{equation}
denotes the number of representations of $N$ as a sum of $\nu$ squarefree
positive integers, then, for integral $N$,
\begin{equation}\label{eq:orthogonality}
  r_\nu(N)=\int_0^1S(\alpha)^\nu\e(-\alpha N)\,d\alpha,
\end{equation}
and asymptotic
formulae for $r_\nu(N)$ were obtained by Evelyn and Linfoot~\cite{EL} and
sharpened by Mirsky~\cite{Mirsky}.  The modern treatment is due to
Br\"udern and Perelli~\cite{BP}, who combined the mean square estimate of
Br\"udern, Granville, Perelli, Vaughan, and Wooley~\cite{BGPVW} for
$S(\alpha)$ on the minor arcs with a pointwise estimate on the same arcs.

To state that pointwise estimate, let $1\le Q\le N^{1/2}$ and let
$\Major(Q)$ be the union of the intervals
\begin{equation}\label{eq:major-arcs}
  \left\{\alpha\in[0,1]:\ |r\alpha-b|\le\frac QN\right\},
  \qquad 1\le r\le Q,\quad b\in\Z,\quad (b,r)=1,
\end{equation}
and put $\Minor(Q)=[0,1]\setminus\Major(Q)$.  Br\"udern and
Perelli~\cite[Theorem~4]{BP} proved that
\begin{equation}\label{eq:BP-minor}
  \sup_{\alpha\in\Minor(Q)}|S(\alpha)|\ll N^{1+\eps}Q^{-1}
\end{equation}
for $Q\le N^{3/7}$; a similar estimate had been obtained by Baker,
Br\"udern, and Harman~\cite{BBH} for $Q\le N^{1/3}$.  Here and below
$\eps$ denotes an arbitrarily small positive number, and the implied
constants in $\ll$ and $O$ may depend on it; $A\asymp B$ means that
$A\ll B\ll A$, and $\|t\|$ denotes the distance from $t$ to the nearest
integer.  Further notation is collected at the beginning of
Section~\ref{sec:lemmas}.  Br\"udern and Perelli observed \cite[p.~610]{BP} that the
restriction $Q\le N^{3/7}$ is the only obstacle to the optimal conditional
error term for $r_3(N)$, and Br\"udern conjectured (see \cite{SP}) that
\eqref{eq:BP-minor} holds in the full range $Q\le N^{1/2}$.  This was
established independently by
Schlage-Puchta~\cite{SP}, by an elementary counting argument, and by
Tolev~\cite{Tolev}, by means of Heath-Brown's square sieve~\cite{HB}.
Schlage-Puchta's estimate is in fact stated for a rational approximation
rather than for minor arcs: if $|q\alpha-a|\le q^{-1}$, then
\begin{equation}\label{eq:SP}
  |S(\alpha)|\ll N^{1+\eps}q^{-1}+N^{\eps}q,
\end{equation}
from which \eqref{eq:BP-minor} follows for $Q\le N^{1/2}$ by Dirichlet's
theorem.  The condition $(a,q)=1$ is not stated in \cite{SP}, but it is used
in the proof and cannot be dispensed with; see Remark~\ref{rem:coprime} below.

The purpose of this paper is to replace the factor $N^{\eps}$ in
\eqref{eq:SP} and \eqref{eq:BP-minor} by a fixed power of $\log N$.

\begin{theorem}\label{thm:main}
Let $N\ge2$, let $\alpha\in\R$, and let $a\in\Z$ and $q\in\N$ satisfy
\begin{equation}\label{eq:main-hyp}
  (a,q)=1,\qquad \bigg|\alpha-\frac aq\bigg|\le\frac1{q^2}.
\end{equation}
Then
\begin{equation}\label{eq:main-bound}
  |S(\alpha)|\ll\left(\frac Nq+q\right)(\log 2N)^{5},
\end{equation}
where the implied constant is absolute.
\end{theorem}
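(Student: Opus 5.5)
We start from the identity $\mu^2(n)=\sum_{d^2\mid n}\mu(d)$, which gives
\[
S(\alpha)=\sum_{d\le\sqrt N}\mu(d)\sum_{m\le N/d^2}\e(\alpha d^2m).
\]
We split the range of $d$ at a parameter $D$ (eventually $D\asymp\sqrt N$ up to logarithms, balanced against $q$).

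For $d\le D$ we use the geometric sum bound
\[
\sum_{m\le M}\e(\beta m)\ll\min(M,\|\beta\|^{-1}),
\]
which gives
\[
\sum_{d\le D}\min\bigl(N/d^2,\|\alpha d^2\|^{-1}\bigr).
\]
We treat this by grouping $d$ dyadically, $d\sim K$. The key input is a count, for each $K$, of how often $\|\alpha d^2\|$ is small. Using $|\alpha-a/q|\le q^{-2}$, the residues $ad^2 \bmod q$ are well spaced, but repeats occur through the number of solutions of $d^2\equiv c\pmod q$. That number is a divisor-type quantity, and bounding it crudely is exactly where $N^\eps$ enters.

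To avoid this we follow the abstract. We detect squares and non-squares using Heath-Brown's square sieve, with sieving primes confined to $(P,2P]$ and $P\asymp C\log N$. Concretely, we reorganize $S(\alpha)$ as
\[
\sum_{m}\e(\alpha m)\cdot\#\{d:\ d^2\mid n\},
\]
so that the large-$d$ contribution becomes a sum over $n=d^2m$ with $m\le N/D^2$ small. We then regard $d^2$ as a square to be detected among integers $k=n/m$ in short ranges. We weight $\e(\alpha m k)$ by the square-sieve quantity
\[
\Bigl(\sum_{P<p\le2P}\Bigl(\frac kp\Bigr)\Bigr)^2,
\]
which equals $\pi_P^2$ when $k$ is a square and is small on average otherwise. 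Expanding gives two kinds of terms:
\begin{itemize}
\item diagonal terms with $p_1=p_2$, of size about $N\pi_P/\pi_P^2$, so they lose only $\log N$ factors;
\item off-diagonal terms twisted by the character $\bigl(\tfrac{\cdot}{p_1p_2}\bigr)$.
\end{itemize}

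For the off-diagonal terms, the character has modulus $p_1p_2\le 4P^2=O((\log N)^2)$. We split $k$ into classes modulo $p_1p_2$ and complete. To control the cutoff $k\le K$ smoothly without the logarithmic loss of a truncated Fourier series, we use a Fejér kernel majorant of finite length $H$. The inner sums then become geometric sums
\[
\sum_{j}\e\bigl((\alpha m p_1p_2+h/(p_1p_2))\,j\bigr)
\]
weighted by Gauss sums of size $\sqrt{p_1p_2}$. Summing over $m$ and $h$, we bound
\[
\sum_{m}\min\bigl(X,\|\alpha m p_1p_2+\theta_h\|^{-1}\bigr)
\]
using the standard Vinogradov lemma. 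Since the hypothesis \eqref{eq:main-hyp} holds for $\alpha$ with $(a,q)=1$, we apply it after noting that multiplication by $p_1p_2$ changes $q$ by at most a factor $\gcd(q,p_1p_2)$. This yields $(N/q+q)$ times powers of $\log N$.

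The main obstacle is the final step, namely counting representations $m p_1p_2\equiv r\pmod q$ without a divisor bound. Here we exploit that $p_1,p_2$ are primes in the narrow window $(P,2P]$. The number of ways to write an integer as $m\,p_1p_2$ with $p_i\in(P,2P]$ is $O(1)$ per factorization pattern, because an integer has at most $\log N/\log P$ prime factors exceeding $P$. This replaces $\tau(n)\ll N^\eps$ by $O(\log N)$. Tracking the exponents gives four logarithms from the sieve and completion, and one more from the dyadic decomposition, for a total of $(\log 2N)^5$.
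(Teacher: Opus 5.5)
There is a genuine gap: the square sieve is applied to a sum with oscillating signs, and a fixed $P\asymp\log N$ cannot control the diagonal. Your outline names the paper's ingredients, but the argument as set up does not close.

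\textbf{The sieve needs nonnegative terms, and $P$ must grow.} Heath-Brown's square sieve is an upper-bound device. The inequality $\mathbf 1_{k=\square}\le4\kappa(k)$ can be inserted only into a sum of \emph{nonnegative} terms. (It needs $P\ge C_0L$ to survive primes dividing $d$; the weight is not exactly $\pi_P^2$ on squares.) So you cannot weight $\e(\alpha mk)$, with $\mu(d)$ dropped, by $(\sum_p(\tfrac kp))^2$ and expand. The paper first takes absolute values to reach $\sum_{d\sim D}\min(M,(2\|\alpha d^2\|)^{-1})$ with $M=N/D^2$. It then uses the Fej\'er construction to majorize the nonnegative function $\min(M,(2\|x\|)^{-1})$ by a nonnegative trigonometric polynomial $G_M$ of degree at most $M/2$. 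This is not a smoothing of the cutoff in $k$; the frequencies $h$ of $G_M$ play the role of your $m$. Nor is the sieve needed for large $d$, since $\sum_{d>Q}N/d^2\ll N/Q$ trivially. It is needed for $N/Q\le D^2\le Q^2$, where $Q=\frac1{64}\min(q,N/q)$.

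A fixed $P\asymp\log N$ cannot work; the abstract says $P$ \emph{may be as small as} a multiple of $\log N$, not that it is. The diagonal $p=p'$ is essentially $4A^{-1}\sum_{D^2<k\le4D^2}G_M(\alpha k)$. Its zero frequency alone is $\gg D^2/A\asymp D^2\log P/P$, and your own estimate $N/\pi_P$ shows the same loss. For $q\asymp N^{1/2}$ and $D^2\asymp N$ this is $\gg N\log\log N/\log N$, far above the target $N^{1/2}L^5$. The paper takes $P=\max(C_0L,D^2/N^{1/2})$, which can be a power of $N$, to balance the diagonal $L^4D^2/P$ against the off-diagonal $L^4PM$.

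\textbf{The off-diagonal needs a joint count.} Once $P$ is large, your off-diagonal step is too weak. Bounding each pair $(p_1,p_2)$ separately by the Vinogradov lemma in $m$ can only give about $\sqrt{p_1p_2}\,r\log r\approx Pr\log r$, because the $M$ residues of $bmp_1p_2$ modulo $r$ may cluster near $0$. For $q\asymp N^{1/2}$, $r\asymp N^{1/2}$, $D^2\asymp N$, no choice of $P$ makes both $D^2/P$ and $Pr$ of size $N^{1/2}L^{O(1)}$. The paper instead does three things.
\begin{itemize}
\item It replaces $a/q$ by a Dirichlet approximation $b/r$ with $Q<r\le N/Q$ and $|\beta|\le Q/(rN)$; the case $r\le Q$ would force $ar=bq$, contradicting $(a,q)=1$. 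Then every nonzero frequency satisfies $1\le h\le M/2<r$, and partial summation in $k$ costs only $O(1)$.
\item It completes modulo $pp'r$, obtaining $|T_{p,p'}(h)|\ll rP\sum_{s+bhpp'\equiv0\ (\mathrm{mod}\ r)}|s|^{-1}$ with no main term.
\item It sums over the triples $(h,p,p')$ \emph{jointly}, counting $\ell=hpp'\le2P^2M$ in one residue class modulo $r$ with multiplicity $R(\ell)\le(\log\ell/\log P)^2$. This is not $O(\log N)$ when $P\asymp\log N$; the loss is cancelled by the $(\log P)^2$ in $A^{-2}$.
\end{itemize}
This joint count is what turns $rP$ into $r/P+PM$. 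Your last paragraph gestures at it, but the step you actually describe does not produce it.
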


\begin{theorem}\label{thm:minor}
Let $N\ge2$ and $1\le Q\le N^{1/2}$.  Then
\begin{equation}\label{eq:minor-bound}
  \sup_{\alpha\in\Minor(Q)}|S(\alpha)|\ll\frac NQ(\log 2N)^{5},
\end{equation}
where the implied constant is absolute.
\end{theorem}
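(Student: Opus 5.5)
Theorem~\ref{thm:minor} follows from Theorem~\ref{thm:main} by Dirichlet's theorem, as for \eqref{eq:SP}.

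Let $\alpha\in\Minor(Q)$. By Dirichlet's approximation theorem with parameter $N/Q$ there are coprime integers $a$ and $q$ with
\[
  1\le q\le \frac NQ,\qquad |q\alpha-a|\le\frac QN .
\]
In particular $|\alpha-a/q|\le Q/(qN)\le 1/q^2$, because $q\le N/Q$. So \eqref{eq:main-hyp} holds.

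We next show that $q>Q$. If $q\le Q$, then the pair $(r,b)=(q,a)$ satisfies $1\le r\le Q$, $(b,r)=1$ and $|r\alpha-b|\le Q/N$. This would place $\alpha$ in $\Major(Q)$ by \eqref{eq:major-arcs}.

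One boundary case needs care: $\alpha\in[0,1]$ while $a$ may lie outside the range that makes the arc meet $[0,1]$. The definition of $\Major(Q)$ allows any $b\in\Z$, so $\alpha$ still lies in the corresponding set intersected with $[0,1]$, and there is no issue.

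Hence $Q<q\le N/Q$. Theorem~\ref{thm:main} now gives
\[
  |S(\alpha)|\ll\left(\frac Nq+q\right)(\log 2N)^5\le\left(\frac NQ+\frac NQ\right)(\log 2N)^5 ,
\]
since $N/q<N/Q$ and $q\le N/Q$. The implied constant is absolute.

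The hypothesis $Q\le N^{1/2}$ ensures $Q\le N/Q$, so the Dirichlet parameter is at least $Q$ and the range for $q$ is consistent. No step here presents a real obstacle.
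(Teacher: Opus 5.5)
Your proof is correct and is essentially the paper's argument: Dirichlet's approximation, exclusion of denominators $\le Q$ via the definition of $\Major(Q)$, and a pointwise bound for the resulting $q\in(Q,N/Q]$. The only difference is that you apply Theorem~\ref{thm:main} where the paper applies Proposition~\ref{prop:minor} with $Q_0=Q/64$. This is legitimate and not circular, since the paper proves Theorem~\ref{thm:main} in Section~\ref{sec:rational} from Proposition~\ref{prop:minor} alone, and it spares you the rescaling to $Q_0$ and the separate case $Q<128$.
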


No restriction $q\le N$ is needed in Theorem~\ref{thm:main}, since for
$q>N$ the term $q$ exceeds the trivial estimate $|S(\alpha)|\le N$.  The
exponent $5$ has not been optimized.  The dependence $N/q+q$ on $q$ is the
same as in \eqref{eq:SP}, and it cannot be improved in a uniform estimate
of this shape; see Remark~\ref{rem:sharp}.  The contribution of the present
paper is therefore the logarithmic factor.  To the best of our knowledge,
no estimate of the form \eqref{eq:main-bound} or \eqref{eq:minor-bound}
with a fixed power of $\log N$ in place of $N^{\eps}$ has appeared before.  Estimates of this form are
preferable to those with $N^{\eps}$ in two respects: the implied constant
is absolute rather than dependent on $\eps$, and the estimate remains
useful in applications where the main term exceeds the error term by
less than a power of $N$.

It is worth explaining why Theorems~\ref{thm:main} and~\ref{thm:minor} do
not follow from the arguments of \cite{SP} or \cite{Tolev} by keeping track
of the powers of $\log N$.  The proof in \cite{SP} passes through the
intermediate estimate
$|S(\alpha)|\ll N^{1+\eps}q^{-1}+N^{1/2+\eps}+N^{\eps}q$, and the middle
term is then absorbed into the other two.  The dyadic subdivisions and the harmonic sum in his argument contribute
only powers of $\log N$.  The input that is not of this kind is the divisor
bound $\tau(n)\ll n^{\eps}$, where $\tau(n)$ is the number of divisors of
$n$, applied in his Lemma~1 to the number of
representations of an integer $n$ in the form
$d_1^2-d_2^2=(d_1-d_2)(d_1+d_2)$; and no fixed power of $\log n$ bounds
$\tau(n)$ pointwise (see Ramanujan~\cite{Ramanujan}), since for
$n=p_1\cdots p_k$, the product of the first $k$ primes, one has $\tau(n)=2^k$ while $\log n\ll k\log 2k$ by the prime
number theorem.  Our proof retains the restricted factorization
$\ell=hpp'$ described below in place of this general multiplicity.  In Tolev's argument the sieve weight is built from
primes $p$ in an interval $(P,2P]$ with $P\ge N^{\eta}$, the smooth
majorant of $\min(M,\|x\|^{-1})$ is truncated at frequencies
$|n|\le M^{1+\eps}$, and after completion of the character sums the
frequencies are grouped as $h=np$ and $h=npp'$ and counted with the divisor
function; these are the places at which his proof introduces $N^{\eps}$.

Our argument follows Tolev's in outline, with three changes.  The
majorant of $\min(M,\|x\|^{-1})$ is a finite Fej\'er polynomial of degree
at most $M/2$ with coefficients $O(\log M)$, so that no truncation is
needed and the zero frequency is the only one to be treated separately.
The sieving primes are confined to an interval $(P,2P]$ with $P$ as small
as a constant multiple of $\log N$, which suffices for the square detector
in the present application.
Finally, after completion of the character sums the number of
representations of an integer $\ell$ as $hpp'$, with $p\ne p'$ in
$(P,2P]$, is at most $(\log\ell/\log P)^2$, and this replaces the divisor
function.  The diagonal terms $p=p'$ are handled by positivity of the
majorant, which removes the need to separate the terms with $p\mid k$.

It is also instructive to compare with \cite{BP}.  After the usual
reduction to the dyadic sums
\begin{equation}\label{eq:dyadic-intro}
  \sum_{D<d\le2D}\min\left(\frac N{D^2},\frac1{\|\alpha d^2\|}\right),
  \qquad D\le Q,
\end{equation}
Br\"udern and Perelli use three arguments.  For $D^2\le Q/8$ and for
$Q/8<D^2\le N/Q$ the bounds they obtain are $N Q^{-1}\log N$, with no loss of
$N^{\eps}$; the factor $N^{\eps}$ in \eqref{eq:BP-minor} enters only in the
remaining range $N/Q\le D^2\le Q^2$, through an estimate of Harman for
Weyl sums with square exponents.  Tolev treats the subrange
$N/Q\le D^2<Q^{4/3}$ of this range by the square sieve and imports the
rest from \cite{BP}.  Our square-sieve estimate, Proposition~\ref{prop:sieve}
below, covers the whole range $N/Q\le D^2\le Q^2$ with a logarithmic loss,
and the two elementary ranges are treated as in \cite{BP}.  The proof of Theorem~\ref{thm:main} is
thus self-contained apart from the standard results quoted in
Section~\ref{sec:lemmas}: Dirichlet's theorem, the prime number theorem,
the evaluation of Gauss sums, and the P\'olya--Vinogradov inequality.

Mean values of $S(\alpha)$ are better understood than its pointwise
behaviour.  Balog and Ruzsa~\cite{BR} determined the order of magnitude of
$\int_0^1|S(\alpha)|\,d\alpha$, and Keil~\cite{Keil} obtained essentially
sharp estimates for all $L^p$ means; these results are of a different
nature and do not yield a pointwise estimate in the range of
Theorem~\ref{thm:main}.  In a different direction, Basak, Berndt, and
the second author~\cite[Section~10]{BBZ} proved that if $\theta_1,\theta_2>0$,
$\theta_1+\theta_2<1$, $v\le N^{\theta_1}$, and
$\beta=\alpha-u/v$ satisfies $|\beta|\le N^{\theta_2-1}$, then
\[
  |S(\alpha)|\le\min\left(N,\frac1{2|\beta|}\right)\frac{\tau(v)}v
  +O_{\eps}\left(N^{\frac12+\frac{\theta_1}2+\theta_2+\eps}
  +N^{\frac{3\theta_1}2+\theta_2+\eps}\right).
\]
This is sharper than \eqref{eq:SP}
near rationals with small denominators, where it gives the main term with
its dependence on $\beta$, but it is confined to that range and is
complementary to, and does not subsume, Theorem~\ref{thm:main}.  Theorem~\ref{thm:main} removes the intermediate term
$N^{8/13}(\log N)^{37/13}$ of an estimate in \cite{DRZZ} and improves the
power of $N$ in the central range $q\asymp N^{1/2}$; the logarithmic
exponent in \eqref{eq:main-bound} has not been optimized. See
\cite{DRZZ2} for an application of the estimates of \cite{DRZZ} to
partitions into $r$-full primes.

We conclude the introduction with two remarks on the shape of
Theorem~\ref{thm:main} and with an outline of the proof.  In
Theorems~\ref{thm:main} and~\ref{thm:minor} the parameter $N$ may be any
real number at least $2$; in statements involving $r_\nu(N)$ it is a
positive integer.  Both remarks use the identity
\begin{equation}\label{eq:mu2}
  \mu^2(n)=\sum_{d^2\mid n}\mu(d),
\end{equation}
which holds because both sides are multiplicative in $n$ and, at a prime
power $p^k$, the right-hand side is $1$ for $k\le1$ and $1+\mu(p)=0$ for
$k\ge2$; and the Euler product
\begin{equation}\label{eq:zeta2}
  \sum_{n\ge1}\frac{\mu(n)}{n^2}=\prod_p\left(1-\frac1{p^2}\right)
  =\frac1{\zeta(2)}=\frac6{\pi^2};
\end{equation}
see \cite[Chapter~1]{MV}.

\begin{remark}[The condition $(a,q)=1$]\label{rem:coprime}
Let $\alpha=1/2$, let $q\asymp N^{1/2}$ be even, and put $a=q/2$.  Then
$|\alpha-a/q|=0$, so that \eqref{eq:main-hyp} holds except for the
condition $(a,q)=1$, and the right-hand side of \eqref{eq:main-bound} is
$\ll N^{1/2}(\log 2N)^5$.  On the other hand,
\[
  S(1/2)=\sum_{d\le N^{1/2}}\mu(d)\sum_{m\le N/d^2}(-1)^{d^2m}.
\]
For odd $d$ the inner sum is $0$ or $-1$, and these $d$ contribute
$O(N^{1/2})$.  For even $d$ the inner sum is $\lfloor N/d^2\rfloor$, and
replacing $\lfloor N/d^2\rfloor$ by $N/d^2$ and extending the sum over
even $d$ to infinity introduces errors of $O(N^{1/2})$ in each case.
Since, by \eqref{eq:zeta2},
\[
  \sum_{2\mid d}\frac{\mu(d)}{d^2}
  =-\frac14\prod_{p>2}\left(1-\frac1{p^2}\right)=-\frac2{\pi^2},
\]
we obtain $S(1/2)=-\frac2{\pi^2}N+O(N^{1/2})$.  Thus
\eqref{eq:main-bound} fails for the unreduced fraction $a/q$.
\end{remark}

\begin{remark}[The terms $N/q$ and $q$]\label{rem:sharp}
Neither term in \eqref{eq:main-bound} can be replaced by a smaller power
of $q$ or of $N/q$ in an estimate that is uniform in $q$.

Let $q$ be even, let $N=3q/2$, and let $\alpha=1/q$.  By
\eqref{eq:zeta2} and the identity $\mu^2(n)=\sum_{d^2\mid n}\mu(d)$,
\begin{equation}\label{eq:squarefree-count}
  E(y):=\sum_{n\le y}\mu^2(n)-\frac6{\pi^2}y
  =\sum_{d\le y^{1/2}}\mu(d)\left\lfloor\frac y{d^2}\right\rfloor
   -\frac6{\pi^2}y\ll y^{1/2},
\end{equation}
the error coming from the integer parts and from the tail
$y\sum_{d>y^{1/2}}d^{-2}$.  Partial summation gives
\[
  S(1/q)-\frac6{\pi^2}\sum_{n\le N}\e(n/q)
  =E(N)\e(N/q)-\sum_{n<N}E(n)\bigl(\e((n+1)/q)-\e(n/q)\bigr)
  \ll q^{1/2},
\]
since $N\asymp q$, $E(n)\ll q^{1/2}$, and
$|\e((n+1)/q)-\e(n/q)|\ll q^{-1}$.  The sum of $\e(n/q)$ over the block
$1\le n\le q$ vanishes, while
$\bigl|\sum_{1\le n\le q/2}\e(n/q)\bigr|=1/\sin(\pi/q)\asymp q$.  Hence
$|S(1/q)|\gg q$ for all large even $q$, whereas $N/q\asymp1$.

Now let $p$ be an odd prime, let $q=p^2$, and let $\alpha=1/p^2$.
Split the identity $\mu^2(n)=\sum_{d^2\mid n}\mu(d)$ according as
$p\mid d$ or $p\nmid d$.  If $p\nmid d$, then $\|d^2/p^2\|\ge p^{-2}$, so
by Lemma~\ref{lem:geometric} the inner sum $\sum_{m\le N/d^2}\e(d^2m/p^2)$
is $O(p^2)$, and these $d$ contribute $O(p^2N^{1/2})$.  If $p\mid d$,
write $d=pm$; then $\mu(pm)=-\mu(m)$ for $(m,p)=1$ and $\mu(pm)=0$
otherwise, the inner sum equals $\lfloor N/(p^2m^2)\rfloor$, and as in
\eqref{eq:squarefree-count} these $d$ contribute
\[
  -\frac N{p^2}\sum_{\substack{m\ge1\\(m,p)=1}}\frac{\mu(m)}{m^2}
  +O(N^{1/2}/p)
  =-\frac{6}{\pi^2(p^2-1)}N+O(N^{1/2}/p),
\]
by \eqref{eq:zeta2}.  Taking $N=p^{10}$, say, gives $|S(1/q)|\gg N/q$.

The same computation shows what happens at an arbitrary fixed reduced
fraction $a/q$.  By Lemma~\ref{lem:geometric}, the inner sum in
$S(a/q)=\sum_{d\le N^{1/2}}\mu(d)\sum_{m\le N/d^2}\e(ad^2m/q)$ is
$\lfloor N/d^2\rfloor$ if $q\mid d^2$ and at most $q/2$ otherwise.
Replacing $\lfloor N/d^2\rfloor$ by $N/d^2$ costs $O(N^{1/2})$, extending
the sum over $d$ with $q\mid d^2$ to infinity costs
$N\sum_{d>N^{1/2}}d^{-2}\ll N^{1/2}$, and the remaining $d$ contribute
$O(qN^{1/2})$; hence
\[
  S(a/q)=N\sum_{q\mid d^2}\frac{\mu(d)}{d^2}+O(qN^{1/2}),
  \qquad\text{so that}\qquad
  \lim_{N\to\infty}\frac{S(a/q)}N=\sum_{q\mid d^2}\frac{\mu(d)}{d^2}.
\]
A squarefree $d$ satisfies $q\mid d^2$ if and only if $q$ is cube-free and
every prime dividing $q$ divides $d$.  Hence the limit vanishes unless
$q=q_1q_2^2$ with $q_1,q_2$ squarefree and coprime, in which case it equals
\[
  \frac{\mu(q_1q_2)}{(q_1q_2)^2}\sum_{\substack{e\ge1\\(e,q_1q_2)=1}}\frac{\mu(e)}{e^2}
  =\frac6{\pi^2}\,\mu(q_1q_2)\prod_{p\mid q_1q_2}\frac1{p^2-1},
\]
which has absolute value $\asymp(q_1q_2)^{-2}=(qq_1)^{-1}$.  Thus at the
rational points the term $N/q$ is attained when the squarefree part $q_1$
of $q$ is bounded, and for squarefree $q$ the main term is only of size
$N/q^2$.  An estimate that improves on \eqref{eq:main-bound} must therefore
take the factorization of $q$ into account.
\end{remark}

The proof of Theorem~\ref{thm:main} proceeds as follows.  In
Section~\ref{sec:lemmas} we collect the auxiliary results: the standard
estimate for $\sum\min(K,\|\alpha h\|^{-1})$, the Fej\'er majorant, the
evaluation of Gauss sums and the P\'olya--Vinogradov inequality for the
characters that occur, and the completion of a quadratic character sum
twisted by an additive character.  Section~\ref{sec:sieve} contains the
square-sieve estimate for the dyadic sums \eqref{eq:dyadic-intro} in the
range $N/Q\le D^2\le Q^2$.  In Section~\ref{sec:minor} the remaining
ranges of $D$ are treated by the trivial estimate and by the arguments of
\cite{BP}, and Theorem~\ref{thm:minor} is deduced.  Theorem~\ref{thm:main}
follows in Section~\ref{sec:rational}.  In Section~\ref{sec:three} we record
what Theorem~\ref{thm:minor} gives for $r_3(N)$ under the generalized
Riemann hypothesis, and where a factor $N^{\eps}$ nevertheless remains.

\section{Auxiliary lemmas}\label{sec:lemmas}

Throughout, $N\ge2$ is real, and we write
\begin{equation}\label{eq:L-def}
  L:=\log 2N.
\end{equation}
The letters $p,p'$ denote primes, $\|t\|$ is the distance from $t$ to the
nearest integer, $(u,v)$ is the greatest common divisor, $\tau(n)$ is the
number of divisors of $n$, and $\pi(y)$ is the number of primes not
exceeding $y$.  For complex $A$ and nonnegative $B$, the notation $A\ll B$,
or $A=O(B)$, means that $|A|\le CB$ for an absolute constant $C$; $B\gg A$
means $A\ll B$, and $A\asymp B$ means that $A\ll B$ and $B\ll A$.  For a
condition $E$, $\mathbf1_E$ is $1$ if $E$ holds and $0$ otherwise.  Sums over $d\sim D$ are over
$D<d\le2D$.  An expression $\min(Y,(2\|t\|)^{-1})$ is interpreted as $Y$
when $\|t\|=0$.  For an odd prime $p$, $(\tfrac np)$ is the Legendre
symbol, equal to $0$ if $p\mid n$, to $1$ if $p\nmid n$ and $n$ is a
quadratic residue modulo $p$, and to $-1$ otherwise; for distinct odd
primes $p,p'$ we put
\begin{equation}\label{eq:chi-def}
  \chi_{pp'}(n):=\left(\frac np\right)\left(\frac n{p'}\right),
\end{equation}
which is a real Dirichlet character modulo $pp'$; it is not principal,
since it takes the value $-1$ at any $n$ that is a quadratic nonresidue
modulo $p$ and congruent to $1$ modulo $p'$.

\begin{lemma}\label{lem:geometric}
Let $Y\ge1$ and $\theta\in\R$, and let $I$ be a set of at most $Y$
consecutive integers.  Then
\[
  \bigg|\sum_{n\in I}\e(\theta n)\bigg|\le\min\left(Y,\frac1{2\|\theta\|}\right).
\]
\end{lemma}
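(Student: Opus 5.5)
The bound $Y$ is immediate from the triangle inequality: each term has modulus $1$ and $I$ has at most $Y$ elements. It therefore suffices to show that the sum is at most $(2\|\theta\|)^{-1}$ when $\|\theta\|>0$. When $\|\theta\|=0$ the convention stated above interprets the minimum as $Y$, so there is nothing more to prove in that case.

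Assume $\theta\notin\Z$ and write $I=\{m+1,\dots,m+K\}$ with $K\le Y$. I would sum the geometric series and factor out $\e(\theta(m+1))$ together with the half-angle phases:
\[
  \bigg|\sum_{n\in I}\e(\theta n)\bigg|
  =\bigg|\frac{\e(\theta K)-1}{\e(\theta)-1}\bigg|
  =\frac{|\sin(\pi\theta K)|}{|\sin(\pi\theta)|}
  \le\frac1{|\sin(\pi\theta)|}.
\]
Since $|\sin(\pi\theta)|$ is $1$-periodic and even, it equals $\sin(\pi\|\theta\|)$.

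The only step needing care is the final inequality. I would use concavity of $\sin$ on $[0,\pi/2]$, which gives Jordan's inequality $\sin x\ge 2x/\pi$ there. Applying it with $x=\pi\|\theta\|\in(0,\pi/2]$ gives $\sin(\pi\|\theta\|)\ge2\|\theta\|$, and hence the sum is at most $(2\|\theta\|)^{-1}$.

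Combining this with the trivial bound $Y$ yields the stated minimum. There is no real obstacle here; the one point to watch is the constant $2$, which Jordan's inequality supplies exactly.
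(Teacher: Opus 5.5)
Your proof is correct and is essentially the paper's argument. The paper likewise sums the geometric progression and bounds it by $2/|1-\e(\theta)|$, then uses $|1-\e(\theta)|=2|\sin\pi\theta|\ge4\|\theta\|$; this is your Jordan-inequality step $\sin(\pi\|\theta\|)\ge2\|\theta\|$ written in different notation.
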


\begin{proof}
The trivial estimate gives $Y$.  If $\|\theta\|>0$, the sum is a
geometric progression, and since
$|1-\e(\theta)|=2|\sin\pi\theta|\ge4\|\theta\|$, its absolute value is at
most $2/|1-\e(\theta)|\le(2\|\theta\|)^{-1}$.
\end{proof}

The next lemma is the form of the standard estimate for
$\sum\min(K,\|\alpha h\|^{-1})$ that we need; compare
\cite[Lemma~2.2]{Vaughan}.

\begin{lemma}\label{lem:spacing}
Let $H,K\ge1$, let $(b,r)=1$, and suppose that $\alpha=b/r+\beta$ with
$|\beta|\le r^{-2}$.  Then
\begin{equation}\label{eq:spacing}
  \sum_{1\le h\le H}\min\left(K,\frac1{2\|\alpha h\|}\right)
  \ll\left(\frac{HK}r+H+K+r\right)\log 2r.
\end{equation}
\end{lemma}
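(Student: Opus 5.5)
We follow the standard argument of \cite[Lemma~2.2]{Vaughan}: split the range $1\le h\le H$ into blocks of $r$ consecutive integers and show that within each block the points $\alpha h$ are well spaced modulo $1$.

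\emph{Blocks.} Cover $1\le h\le H$ by at most $H/r+1$ blocks of the form $h=h_0+j$ with $0\le j<r$. Within a fixed block write
\[
  \alpha h=\alpha h_0+\frac{bj}r+\beta j .
\]
As $j$ runs over $0\le j<r$, the residues $bj \bmod r$ run over a complete residue system, since $(b,r)=1$. The perturbation satisfies $|\beta j|<r\cdot r^{-2}=1/r$. Hence, for each integer $s$ with $|s|\le r/2$, the number of $j$ for which $\|\alpha h\|$ lies in $[s/r,(s+1)/r)$ is bounded by an absolute constant, say at most $4$. The point is that $\alpha h \bmod 1$ equals $\alpha h_0+bj/r+\beta j$, and the shift by $\beta j$ moves each point by less than one spacing $1/r$.

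\emph{Sum over one block.} For the $O(1)$ values of $j$ with $\|\alpha h\|<1/r$ we use the bound $K$. The remaining values give at most
\[
  \sum_{1\le s\le r/2}\frac{4r}{2s}\ll r\log 2r .
\]
So a single block contributes $\ll K+r\log 2r$.

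\emph{Combining.} Summing over the blocks gives
\[
  \Bigl(\frac Hr+1\Bigr)\bigl(K+r\log 2r\bigr)\ll\Bigl(\frac{HK}r+K+H\log 2r+r\log 2r\Bigr),
\]
and this is dominated by \eqref{eq:spacing}.

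\emph{Main obstacle: the small-$\|\cdot\|$ count.} The only delicate step is the claim that in each block only $O(1)$ values of $j$ have $\|\alpha h\|<1/r$, uniformly in $\beta$. I would verify it with the map $j\mapsto bj \bmod r$ together with $|\beta j|<1/r$. The points $\alpha h_0+bj/r$ are exactly spaced at distance $1/r$, and each is moved by less than $1/r$. Therefore any interval of length $1/r$ contains at most three of the perturbed points, and the counts above follow with room to spare.

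The case $r=1$ is trivial: the sum is at most $HK$, which is already within \eqref{eq:spacing}. So we may assume $r\ge2$, where $\log 2r\asymp\log r$ and the harmonic sum gives the stated logarithm.
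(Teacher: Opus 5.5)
Your argument is correct and is essentially the paper's. Both split $[1,H]$ into blocks of length $\asymp r$, show that an arc of length $\asymp 1/r$ contains $O(1)$ of the points $\alpha h$ from one block, and sum a harmonic series to get $\ll K+r\log 2r$ per block and $\ll(H/r+1)(K+r\log 2r)$ in total. The only difference is technical: the paper uses blocks of length at most $r/2$, so that $|\beta(h_1-h_2)|\le1/(2r)$ gives pairwise separation $1/(2r)$, whereas you use full blocks of length $r$ and compare with the exact lattice $\alpha h_0+t/r$, as in Vaughan. (Your ``at most three per interval of length $1/r$'' gives at most six, not four, points with $\|\alpha h\|\in[s/r,(s+1)/r)$ unless you also note that all shifts $\beta j$ have the same sign, but any absolute constant suffices.)
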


\begin{proof}
For $r\le3$ the trivial bound $HK$ suffices, so let $r\ge4$.  Divide
$[1,H]$ into at most $2H/r+1$ blocks of consecutive integers of length at
most $r/2$.  If $h_1\ne h_2$ lie in the same block, then
$1\le|h_1-h_2|\le r/2<r$, so $\|b(h_1-h_2)/r\|\ge1/r$, and
\[
  \|\alpha(h_1-h_2)\|\ge\left\|\frac{b(h_1-h_2)}r\right\|-|\beta||h_1-h_2|
  \ge\frac1r-\frac1{2r}=\frac1{2r}.
\]
Thus the points $\alpha h$, for $h$ in one block, are spaced at least
$1/(2r)$ apart modulo $1$.  At most one of them satisfies
$\|\alpha h\|<1/(4r)$, and for each $j\ge1$ at most two of them satisfy
$j/(4r)\le\|\alpha h\|<(j+1)/(4r)$, since this set is the union of two
intervals of length $1/(4r)$.  The contribution of one block is therefore
at most
\[
  K+\sum_{1\le j\le2r}\frac{2\cdot4r}{2j}\ll K+r\log 2r,
\]
and \eqref{eq:spacing} follows on multiplying by the number of blocks.
\end{proof}

\begin{lemma}\label{lem:majorant}
Let $M\ge1$.  There is a trigonometric polynomial
\begin{equation}\label{eq:G-def}
  G_M(x)=\sum_{|h|\le H_M}c_h\e(hx),
  \qquad H_M\le\frac M2,
\end{equation}
with real coefficients satisfying $c_{-h}=c_h$ and
$0\le c_h\le12\log 2M$, such that $G_M(x)\ge0$ and
\begin{equation}\label{eq:G-majorizes}
  \min\left(M,\frac1{2\|x\|}\right)\le G_M(x)
\end{equation}
for all real $x$.
\end{lemma}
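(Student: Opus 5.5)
Our plan is to build $G_M$ as a sum of dyadically scaled Fejér kernels. For an integer $K\ge1$ let
\[
  F_K(x)=\sum_{|h|<K}\Bigl(1-\frac{|h|}K\Bigr)\e(hx)=\frac1K\left(\frac{\sin\pi Kx}{\sin\pi x}\right)^2\ge0 .
\]
We will use two elementary facts about $F_K$. First, its coefficients lie in $[0,1]$ and are even in $h$. Second, $F_K(x)\ge cK$ whenever $\|x\|\le 1/(2K)$, with an absolute $c>0$. The second fact holds because then $|\sin\pi Kx|\ge 2K\|x\|$ (from $\sin t\ge 2t/\pi$ on $[0,\pi/2]$) and $|\sin\pi x|\le\pi\|x\|$, so that $F_K(x)\ge \frac1K\cdot\frac{4K^2}{\pi^2}=\frac{4K}{\pi^2}$.

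For small $M$ (say $M<4$) the constant polynomial $G_M\equiv M$ already works, since $c_0=M\le 12\log 2M$ and $H_M=0$. For $M\ge4$, set $J=\lfloor\log_2(M/2)\rfloor\ge1$ and $K_j=2^j$ for $0\le j\le J$, so that $K_J\le M/2$. We then define
\[
  G_M(x)=C\sum_{j=0}^{J}F_{K_j}(x)
\]
for a suitable absolute constant $C$. This has degree $H_M\le K_J-1\le M/2$ and even real coefficients in $[0,C(J+1)]$, and $C(J+1)\le 12\log 2M$ provided $C$ is modest (we check this below). Nonnegativity is automatic.

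It remains to prove the majorization \eqref{eq:G-majorizes}, which we do by cases on $\|x\|$.

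If $\|x\|\ge 1/4$, the target is at most $2$, and the $j=0$ term equals $C\ge2$.

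If $1/(2K_J)\le\|x\|<1/4$, choose the largest $j$ with $\|x\|\le 1/(2K_j)$. Then $K_j\ge 1/(4\|x\|)$, and the bound $F_{K_j}(x)\ge 4K_j/\pi^2$ gives $G_M(x)\ge C\cdot\frac{4}{\pi^2}\cdot\frac1{4\|x\|}$. This is at least $1/(2\|x\|)$ once $C\ge \pi^2/2$.

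If $\|x\|<1/(2K_J)$, the $j=J$ term gives $G_M(x)\ge C\cdot 4K_J/\pi^2$. Since $K_J>M/4$, this is $\ge CM/\pi^2\ge M$ once $C\ge\pi^2$.

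So $C=\pi^2\approx9.87$ suffices. The coefficient bound then reads $C(J+1)\le \pi^2(\log_2 M)\le \pi^2\log M/\log 2\approx14.2\log M$, which slightly exceeds $12\log 2M$.

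The main obstacle is therefore purely numerical: getting the constant $12$. We would try the following in turn.
\begin{itemize}
\item Sharpen the lower bound on $F_K$. At $\|x\|\le1/(2K)$ the worst case is $\|x\|=1/(2K)$, where $F_K=\frac1K\sin^{-2}(\pi/2K)\ge 4K/\pi^2$. This cannot be improved much, but in the middle range one may use the largest $j$ with $\|x\|\le 1/K_j$ instead, balancing the two losses.
\item Use ratio-$3$ or ratio-$4$ scaling in place of $2$. This halves the number of terms at the cost of a weaker lower bound.
\item Exploit that $\sum_j F_{K_j}$ exceeds its largest term, since the neighbouring terms also contribute at least $K_{j-1}$-sized amounts.
\end{itemize}
With $L$-dependence $\log 2M$ rather than $\log M$ there is room, and one of these adjustments, or summing all terms' contributions, should bring the constant below $12$. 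Any absolute constant would serve the paper's purposes anyway.
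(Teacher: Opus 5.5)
Your construction is sound except at the point the statement makes explicit: the coefficient bound. With $C=\pi^2$ the zeroth coefficient is $c_0=\pi^2(J+1)$, and this exceeds $12\log 2M$ for large $M$. Already at $M=64$ one has $J=5$ and $c_0=6\pi^2\approx59.2>12\log128\approx58.2$. So the proposal proves the lemma only with $12$ replaced by a larger constant, and the remedies you list are not carried out.

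The first two remedies do not work as stated. $F_K$ vanishes at $\|x\|=1/K$, so the condition $\|x\|\le 1/K_j$ gives no lower bound at all. Changing the ratio to $\rho$ while still using only the largest admissible kernel leads, by the same bookkeeping, to coefficients of size about $\frac{\pi^2\rho}{2\log\rho}\log M\ge\frac{\pi^2 e}{2}\log M\approx13.4\log M$. The binding constraint is your last case: $K_J$ may be barely larger than $M/4$, and a single kernel then forces $C\ge\pi^2$. Your remark that any absolute constant would serve the later application is true, but it does not prove the statement as written.

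The missing step is your third bullet, which is exactly the paper's device. Take $C=\pi^2/2$. For $j\ge1$ and $\|x\|\le 2^{-j-1}$ you have $CF_{2^j}(x)\ge 2^{j+1}$. The $j=0$ term equals $\pi^2/2>4$, which also covers $\|x\|>1/4$. If $\|x\|\le1/4$, let $k\ge1$ be the largest $j\le J$ with $\|x\|\le 2^{-j-1}$. Then every $j\le k$ is admissible, and
\[
  G_M(x)\ge\frac{\pi^2}{2}+\sum_{1\le j\le k}2^{j+1}=\frac{\pi^2}{2}-4+2^{k+2}>2^{k+2}.
\]
If $k<J$, maximality gives $\|x\|>2^{-k-2}$, so $1/(2\|x\|)<2^{k+1}<G_M(x)$. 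If $k=J$, then $G_M(x)>2^{J+2}>M$, since $2^J>M/4$. The coefficients are now at most $\frac{\pi^2}{2}\log_2 M<7.2\log M\le 12\log 2M$.

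The paper packages the same geometric-series gain as the pointwise inequality $\min(M,1/(2\delta))\le 2+\sum_{2\le j\le J}2^{j-1}\mathbf 1_{\{\delta\le 2^{-j}\}}$ with $J=\lceil\log_2 M\rceil$. It then majorizes each term $2^{j-1}\mathbf 1_{\{\delta\le 2^{-j}\}}$ by $\frac{\pi^2}{2}F_{2^{j-2}}$ and gets coefficients at most $2+5\log_2 M$.

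Alternatively, shrinking rather than enlarging the window in your first bullet also works. On $\|x\|\le 1/(4K)$ one has $F_K(x)\ge 8K/\pi^2$, and with that window the largest kernel alone suffices with $C=\pi^2/2$.
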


\begin{proof}
For an integer $H\ge1$, the Fej\'er kernel
\[
  F_H(x):=\frac1H\bigg|\sum_{0\le n<H}\e(nx)\bigg|^2
  =\sum_{|h|<H}\left(1-\frac{|h|}H\right)\e(hx)
  =\frac1H\left(\frac{\sin\pi Hx}{\sin\pi x}\right)^2
\]
is nonnegative with nonnegative coefficients at most $1$.  If
$0<\|x\|\le1/(4H)$, then $H\|x\|\le1/4$, so
$|\sin\pi Hx|=\sin(\pi H\|x\|)\ge2H\|x\|$, while $|\sin\pi x|\le\pi\|x\|$;
hence $F_H(x)\ge4H/\pi^2$, and the same holds at $x\in\Z$ since
$F_H(0)=H$.

If $M<8$, take $G_M=8$.  If $M\ge8$, put $J=\lceil\log_2M\rceil$ and
$H_j=2^{j-2}$ for $2\le j\le J$.  With $\delta=\|x\|$ we claim that
\begin{equation}\label{eq:dyadic-majorant}
  \min\left(M,\frac1{2\delta}\right)
  \le2+\sum_{2\le j\le J}2^{j-1}\mathbf1_{\{\delta\le2^{-j}\}}.
\end{equation}
If $\delta>1/4$, the left side is less than $2$.  If
$2^{-k-1}<\delta\le2^{-k}$ with $2\le k<J$, the right side is
$2+\sum_{j=2}^k2^{j-1}=2^k$ and the left side is less than $2^k$.  If
$\delta\le2^{-J}$, the right side is $2^J\ge M$.  Since
$1/(4H_j)=2^{-j}$, the polynomial
\[
  G_M(x):=2+\frac{\pi^2}2\sum_{2\le j\le J}F_{H_j}(x)
\]
majorizes the right side of \eqref{eq:dyadic-majorant}.  It is
nonnegative, its coefficients are nonnegative and symmetric, each is at most
$2+\frac{\pi^2}2(J-1)\le2+5\log_2M\le12\log2M$, and its degree is less
than $H_J=2^{J-2}<M/2$.
\end{proof}

\begin{lemma}[Dirichlet]\label{lem:Dirichlet}
Let $\alpha\in\R$ and $H\ge1$.  There are coprime integers $b$ and
$r\ge1$ with $r\le H$ and $|r\alpha-b|\le1/H$.
\end{lemma}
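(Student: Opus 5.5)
The plan is to prove Lemma~\ref{lem:Dirichlet} by the pigeonhole principle, in the standard form that gives the non-strict inequality $|r\alpha-b|\le1/H$ for real, not necessarily integral, $H\ge1$. Put $K=\lfloor H\rfloor\ge1$, and note that $1/(K+1)<1/H$. Then I will show that there is an $r$ with $1\le r\le K$ and $\|r\alpha\|\le1/(K+1)$. Since $r\le K\le H$ and $\|r\alpha\|\le 1/(K+1)\le1/H$, this gives the statement with a suitable integer $b$.

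To find such an $r$, consider the $K+2$ numbers
\[
0,\ \{\alpha\},\ \{2\alpha\},\ \ldots,\ \{K\alpha\},\ 1
\]
in $[0,1]$, where $\{t\}$ is the fractional part. Split $[0,1]$ into the $K+1$ closed intervals $[j/(K+1),(j+1)/(K+1)]$ for $0\le j\le K$. Since there are $K+2$ numbers and only $K+1$ intervals, two of them lie in a common interval.

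The two numbers cannot be $0$ and $1$, because each interval has length $1/(K+1)<1$ when $K\ge1$. Hence the pair is of one of the following kinds.
\begin{itemize}
\item $\{i\alpha\}$ and $\{j\alpha\}$ with $0\le i<j\le K$. Then $r=j-i$ gives $|r\alpha-b|\le1/(K+1)$ for the integer $b=\lfloor j\alpha\rfloor-\lfloor i\alpha\rfloor$.
\item $\{j\alpha\}$ and $0$, with $1\le j\le K$. Take $r=j$ and $b=\lfloor j\alpha\rfloor$.
\item $\{j\alpha\}$ and $1$, with $1\le j\le K$. Take $r=j$ and $b=\lfloor j\alpha\rfloor+1$.
\end{itemize}
In each case $1\le r\le K$.

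Finally I need $(b,r)=1$. Let $g=(b,r)$ and replace $(b,r)$ by $(b/g,r/g)$. The new denominator still satisfies $1\le r/g\le r\le H$. The new difference satisfies $|(r/g)\alpha-b/g|=|r\alpha-b|/g\le1/H$, so the inequality is preserved.

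There is no real obstacle here. The only points to watch are that $H$ need not be an integer, which is why the argument goes through $K=\lfloor H\rfloor$, and that coprimality has to be imposed at the end by the reduction step.
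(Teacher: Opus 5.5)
Your proof is correct: the box principle applied to $0,\{\alpha\},\dots,\{K\alpha\},1$ with $K=\lfloor H\rfloor$ gives $|r\alpha-b|\le 1/(K+1)<1/H$, and dividing $b$ and $r$ by their greatest common divisor keeps $r\le H$ and does not increase $|r\alpha-b|$. The paper gives no argument of its own and cites Lemma~2.1 of Vaughan's book, for which yours is the standard pigeonhole proof.
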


\begin{proof}
See \cite[Lemma~2.1]{Vaughan}.
\end{proof}

\begin{lemma}\label{lem:primes}
There are absolute constants $c>0$ and $y_0\ge2$ such that
$\pi(2y)-\pi(y)\ge cy/\log y$ for $y\ge y_0$.
\end{lemma}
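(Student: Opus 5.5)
The statement is a weak form of the prime number theorem, or equivalently of Chebyshev's bound for primes in the interval $(y,2y]$. The plan is to deduce it directly from the prime number theorem $\pi(x)\sim x/\log x$, which the introduction lists among the standard results to be quoted. A fully elementary alternative, going through Chebyshev-type bounds, is also possible but unnecessary here.

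By the prime number theorem there is $y_1\ge2$ such that
\[
  \left|\pi(x)-\frac{x}{\log x}\right|\le\frac{1}{10}\,\frac{x}{\log x}\qquad(x\ge y_1).
\]
For $y\ge y_1$ this gives
\[
  \pi(2y)-\pi(y)\ge\frac{9}{10}\,\frac{2y}{\log 2y}-\frac{11}{10}\,\frac{y}{\log y}.
\]
The first term exceeds $\frac{18}{10}\,\frac{y}{\log y}\cdot\frac{\log y}{\log 2y}$, and $\log y/\log 2y\to1$. Choosing $y_0\ge y_1$ so large that $\log y/\log 2y\ge 9/10$ for $y\ge y_0$, the lower bound becomes at least
\[
  \left(\frac{162}{100}-\frac{110}{100}\right)\frac{y}{\log y}=\frac{52}{100}\,\frac{y}{\log y}.
\]
So $c=1/2$ works.

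There is no real obstacle. The only point needing care is that the margins chosen make the difference positive, since the two $x/\log x$ terms nearly cancel at leading order only if the error allowance is too large. Any relative error below $1/3$ suffices, and the choice $1/10$ above is comfortably inside that.

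If one preferred to avoid the prime number theorem, the route would be Chebyshev's argument with central binomial coefficients:
\[
  \prod_{n<p\le 2n}p\ \text{ divides }\ \binom{2n}{n}\ge\frac{4^n}{2n+1},
\]
combined with the upper bound $\theta(x)\le x\log 4$ to control prime powers dividing $\binom{2n}{n}$. This yields $\theta(2y)-\theta(y)\gg y$ for large $y$, and hence the claim after dividing by $\log 2y$. Since the paper quotes the prime number theorem, the first argument is the one I would give.
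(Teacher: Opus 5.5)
Your proof is correct and matches the paper, which also deduces the lemma directly from the prime number theorem (citing Montgomery--Vaughan, Theorem~6.9); your relative error $1/10$, giving $c=1/2$, works. In the optional Chebyshev aside, $\theta(x)\le x\log 4$ alone does not control the primes $p\le n$ dividing $\binom{2n}{n}$: you would also need that primes in $(2n/3,n]$ do not divide it, though this does not affect your main argument.
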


\begin{proof}
This follows from the prime number theorem; see \cite[Theorem~6.9]{MV}.
\end{proof}

\begin{lemma}\label{lem:Gauss}
Let $p\ne p'$ be odd primes, let $m=pp'$, and for $t\in\Z$ put
\[
  G_m(t):=\sum_{u\bmod m}\chi_{pp'}(u)\e\left(\frac{tu}m\right).
\]
Then $|G_m(t)|\le\sqrt m$ for every $t$, and $G_m(t)=0$ if $(t,m)>1$.
\end{lemma}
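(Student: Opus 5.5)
I will reduce the statement to the classical one-prime Gauss sum by the Chinese remainder theorem. Write $u\bmod m$ uniquely as $u\equiv p'x+py\pmod m$ with $x$ running modulo $p$ and $y$ modulo $p'$. Then $\left(\frac up\right)=\left(\frac{p'x}p\right)$ and $\left(\frac u{p'}\right)=\left(\frac{py}{p'}\right)$, while $\e(tu/m)=\e(tx/p)\e(ty/p')$. Hence the sum factors as
\[
  G_m(t)=\left(\frac{p'}p\right)\left(\frac p{p'}\right)
  \sum_{x\bmod p}\left(\frac xp\right)\e\left(\frac{tx}p\right)
  \sum_{y\bmod p'}\left(\frac y{p'}\right)\e\left(\frac{ty}{p'}\right),
\]
so that $|G_m(t)|=|g_p(t)|\,|g_{p'}(t)|$, where $g_p(t)$ is the quadratic Gauss sum with the Legendre symbol modulo $p$.

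It therefore suffices to prove, for an odd prime $p$, that $|g_p(t)|\le\sqrt p$ for all $t$ and $g_p(t)=0$ when $p\mid t$. If $p\mid t$, then $g_p(t)=\sum_{x\bmod p}\left(\frac xp\right)=0$, because there are equally many residues and nonresidues. If $p\nmid t$, the substitution $x\mapsto t^{-1}x$ gives $g_p(t)=\left(\frac tp\right)g_p(1)$. It then remains to show $|g_p(1)|^2=p$. For this I expand $|g_p(1)|^2=\sum_{x,y}\left(\frac{xy}p\right)\e((x-y)/p)$ and substitute $x=sy$ for $y\ne0$. The sum becomes $\sum_{s}\left(\frac sp\right)\sum_{y\ne0}\e(y(s-1)/p)$. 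The inner sum equals $p-1$ when $s=1$ and $-1$ otherwise. Using $\sum_s\left(\frac sp\right)=0$, the total is $(p-1)+1=p$. Alternatively, I can simply quote the standard evaluation of Gauss sums, which the introduction lists among the results cited.

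Combining the two steps: if $(t,m)>1$, then $p\mid t$ or $p'\mid t$, so one of the two factors vanishes and $G_m(t)=0$. Otherwise $|G_m(t)|=\sqrt p\sqrt{p'}=\sqrt m$. In either case $|G_m(t)|\le\sqrt m$. The only point that needs care is checking that the Chinese remainder parametrization really gives a bijection of residue classes modulo $m$, and that the Legendre symbols factor through it as claimed. This is routine, so I expect no serious obstacle.
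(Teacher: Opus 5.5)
Your proof is correct and follows the paper's argument: the Chinese remainder theorem factors $G_m(t)$ into two prime-modulus Gauss sums, and then $G_p(t)=0$ for $p\mid t$ and $G_p(t)=(\tfrac tp)G_p(1)$ with $|G_p(1)|=\sqrt p$ otherwise. The differences are cosmetic. Your parametrization $u\equiv p'x+py$ puts the twist on the Legendre symbols, giving the harmless sign $(\tfrac{p'}p)(\tfrac p{p'})$, whereas the paper's $u=vp'\overline{p'}+wp\bar p$ puts it in the additive characters as $G_p(t\overline{p'})G_{p'}(t\bar p)$. You also verify $|G_p(1)|^2=p$ directly, where the paper quotes the standard evaluation.
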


\begin{proof}
For an odd prime $p$ and $t\in\Z$, let
$G_p(t)=\sum_{u\bmod p}(\tfrac up)\e(tu/p)$.  Then $G_p(t)=0$ if
$p\mid t$, and $G_p(t)=(\tfrac tp)G_p(1)$ with $|G_p(1)|=\sqrt p$ if
$p\nmid t$; see \cite[Theorem~9.7]{MV}, applied to the primitive real character $(\tfrac\cdot p)$.
Choose $\overline{p'}$ and $\bar p$ with $p'\overline{p'}\equiv1\pmod p$
and $p\bar p\equiv1\pmod{p'}$.  As $v$ runs modulo $p$ and $w$ runs modulo
$p'$, $u=vp'\overline{p'}+wp\bar p$ runs over a complete residue system
modulo $m$, with $u\equiv v\pmod p$ and $u\equiv w\pmod{p'}$.  Hence
$\chi_{pp'}(u)=(\tfrac vp)(\tfrac w{p'})$ and
$\e(tu/m)=\e(tv\overline{p'}/p)\e(tw\bar p/p')$, so that
\[
  G_m(t)=G_p(t\overline{p'})\,G_{p'}(t\bar p),
\]
and both assertions follow from the prime-modulus case.
\end{proof}

\begin{lemma}[P\'olya--Vinogradov]\label{lem:PV}
Let $p\ne p'$ be odd primes, $m=pp'$, and let $I$ be a set of consecutive
integers.  Then
\[
  \sum_{n\in I}\chi_{pp'}(n)\ll\sqrt m\log m.
\]
\end{lemma}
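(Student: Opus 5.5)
The plan is to use the classical Pólya argument: expand the character into additive characters by finite Fourier analysis on $\Z/m\Z$, and then sum the geometric progressions. Since $\chi_{pp'}$ is nonprincipal (as noted after \eqref{eq:chi-def}), we may assume $|I|\le m$: a full period contributes $\sum_{u\bmod m}\chi_{pp'}(u)=0$, so we can discard complete blocks of $m$ consecutive integers and keep at most $m-1$ remaining terms.

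For $n\in\Z$ we write
\[
  \chi_{pp'}(n)=\frac1m\sum_{t\bmod m}\overline{G_m(t)}\,\e\left(\frac{tn}m\right)\cdot c,
\]
where the exact normalisation comes from Fourier inversion. Concretely, we have $\chi_{pp'}(n)=\frac1m\sum_{t\bmod m}\e(tn/m)\sum_{u\bmod m}\chi_{pp'}(u)\e(-tu/m)$, and the inner sum is $G_m(-t)$. The term $t=0$ has $G_m(0)=0$ by Lemma~\ref{lem:Gauss}, since $(0,m)=m>1$; this is also just nonprincipality. Summing over $n\in I$ and applying Lemma~\ref{lem:Gauss} ($|G_m(-t)|\le\sqrt m$) together with Lemma~\ref{lem:geometric} to the inner sum, we get
\[
  \bigg|\sum_{n\in I}\chi_{pp'}(n)\bigg|\le\frac{\sqrt m}m\sum_{0<|t|\le m/2}\frac1{2\|t/m\|}
  =\frac{\sqrt m}m\sum_{0<|t|\le m/2}\frac m{2|t|}\ll\sqrt m\log m.
\]
Here the residues $t\not\equiv0$ are represented with $|t|\le m/2$, so that $\|t/m\|=|t|/m$.

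No step is a genuine obstacle. The only points needing care are the Fourier inversion identity, which is the orthogonality $\sum_{t\bmod m}\e(t(n-u)/m)=m\mathbf1_{n\equiv u}$, and the harmonic sum being $\ll\log m$. The latter holds because $m\ge15$, so $\log m$ is bounded below and there is no issue with small moduli.
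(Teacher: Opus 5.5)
Your proof is correct, but it takes a different route from the paper. The paper's proof is a single sentence: it notes that $\chi_{pp'}$ is nonprincipal and cites the P\'olya--Vinogradov inequality \cite[Theorem~9.18]{MV}, which holds for every nonprincipal character to any modulus.

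You prove the bound directly, from three ingredients:
\begin{enumerate}
\item finite Fourier inversion on $\Z/m\Z$;
\item the bound $|G_m(t)|\le\sqrt m$ of Lemma~\ref{lem:Gauss}, valid for \emph{every} $t$, together with $G_m(0)=0$;
\item Lemma~\ref{lem:geometric}.
\end{enumerate}
This is the classical argument for primitive characters. It needs no reduction to primitive characters here, because Lemma~\ref{lem:Gauss} already supplies the Gauss-sum bound at every frequency. It is also the completion mechanism of Lemma~\ref{lem:completion}, with the zero frequency removed by $G_m(0)=0$ rather than by a congruence condition.

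What your route buys is self-containedness: P\'olya--Vinogradov could be dropped from the list of imported results in the introduction. It also gives an explicit constant, namely $\sqrt m\sum_{1\le t\le(m-1)/2}t^{-1}$. What the citation buys is brevity and full generality.

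Two cosmetic points:
\begin{itemize}
\item The preliminary reduction to $|I|\le m$ is unnecessary. The bound $(2\|t/m\|)^{-1}$ from Lemma~\ref{lem:geometric} does not depend on $|I|$.
\item Delete the first display with the undetermined factor $c$ and keep only the exact identity you write next. In fact $c=1$, since $\overline{G_m(t)}=G_m(-t)$ because $\chi_{pp'}$ is real.
\end{itemize}
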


\begin{proof}
Since $\chi_{pp'}$ is a nonprincipal character modulo $m$, this is the
P\'olya--Vinogradov inequality; see \cite[Theorem~9.18]{MV}.
\end{proof}

The final lemma completes, modulo $pp'r$, a quadratic character modulo
$pp'$ multiplied by an additive character modulo $r$.

\begin{lemma}\label{lem:completion}
Let $p\ne p'$ be odd primes, $m=pp'$, and let $r\ge1$ and $b$ be integers
with $(m,r)=1$ and $(b,r)=1$.  Let $I$ be a set of consecutive integers and
let $h$ be an integer with $1\le|h|<r$.  Then
\begin{equation}\label{eq:completion}
  \sum_{k\in I}\chi_{pp'}(k)\e\left(\frac{bhk}r\right)
  \ll r\sqrt m\sum_{\substack{0<|s|\le mr/2\\ s+bhm\equiv0\ (\mathrm{mod}\ r)}}
  \frac1{|s|}.
\end{equation}
\end{lemma}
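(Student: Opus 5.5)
The plan is the standard completion argument modulo $mr$. The summand $f(k)=\chi_{pp'}(k)\e(bhk/r)$ is periodic modulo $mr$, because $(m,r)=1$. We expand it in additive characters modulo $mr$:
\[
  f(k)=\frac1{mr}\sum_{s\bmod mr}\widehat f(s)\,\e\Bigl(\frac{sk}{mr}\Bigr),
  \qquad
  \widehat f(s)=\sum_{u\bmod mr}\chi_{pp'}(u)\e\Bigl(\frac{bhu}r-\frac{su}{mr}\Bigr).
\]
Since $mr$ is coprime-split, write $u=xr\bar r+ym\bar m$, where $r\bar r\equiv1\pmod m$ and $m\bar m\equiv1\pmod r$, with $x$ modulo $m$ and $y$ modulo $r$. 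The character $\chi_{pp'}(u)$ depends only on $x$. The phase becomes $\e\bigl((bhm-s)u/(mr)\bigr)$, which splits by the Chinese remainder theorem into a product of a sum over $x$ modulo $m$ and a sum over $y$ modulo $r$.

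The sum over $y$ modulo $r$ is a complete additive character sum. It equals $r$ when $s\equiv bhm\pmod r$ and vanishes otherwise. The sum over $x$ is a Gauss sum $G_m(t)$ for a suitable $t$, and Lemma~\ref{lem:Gauss} bounds it by $\sqrt m$. Hence $|\widehat f(s)|\le r\sqrt m$, and $\widehat f(s)$ vanishes unless $s\equiv bhm\pmod r$. (The sign convention is chosen so that the congruence reads $s+bhm\equiv0$ after replacing $s$ by $-s$.)

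Next we sum over $k\in I$ and interchange sums. Taking representatives $|s|\le mr/2$, we obtain
\[
  \sum_{k\in I}f(k)=\frac1{mr}\sum_{\substack{|s|\le mr/2\\ s+bhm\equiv0\ (r)}}\widehat f(-s)\sum_{k\in I}\e\Bigl(-\frac{sk}{mr}\Bigr).
\]
By Lemma~\ref{lem:geometric}, the inner geometric sum is at most $(2\|s/(mr)\|)^{-1}=mr/(2|s|)$ for $0<|s|\le mr/2$. Multiplying by the factor $r\sqrt m/(mr)$ gives exactly the term $r\sqrt m/|s|$ in \eqref{eq:completion}, up to a constant.

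It remains to exclude the frequency $s=0$, which is the only delicate point, since the geometric sum there would only be bounded by $|I|$, which is uncontrolled. The frequency $s=0$ satisfies the congruence only if $r\mid bhm$. This is impossible: $(b,r)=(m,r)=1$ and $1\le|h|<r$ give $r\nmid bhm$ (for $r=1$ no $h$ exists, so the statement is vacuous). Hence $s=0$ never occurs, and the bound follows.

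If $I$ is long, one could first reduce it to fewer than $mr$ terms by removing complete periods. Each complete period contributes $\widehat f(0)/\ldots=0$ for the same reason, namely that $f$ has mean zero. This reduction is not needed, however, because Lemma~\ref{lem:geometric} holds for any $Y$ through the $\|\theta\|$ bound.
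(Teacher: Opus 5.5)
Your proof is correct and is essentially the paper's argument: completion modulo $R=mr$, a Chinese-remainder splitting of $\widehat f(s)$ into $r$ times the indicator of $s+bhm\equiv0\pmod r$ (after your sign change) and a Gauss sum bounded by $\sqrt m$ via Lemma~\ref{lem:Gauss}, exclusion of $s=0$ using $(bm,r)=1$ and $0<|h|<r$, and Lemma~\ref{lem:geometric} for the remaining frequencies. The only blemish is cosmetic: when $r$ is even, summing over all $|s|\le mr/2$ counts the residue class of $mr/2$ twice, so your displayed identity should be stated as an upper bound after taking absolute values; this does not affect the conclusion.
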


\begin{proof}
Put $R=mr$ and $f(x)=\chi_{pp'}(x)\e(bhx/r)$, which has period $R$.  For
$s\in\Z$ let $\widehat f(s)=\sum_{x\bmod R}f(x)\e(sx/R)$, so that
\begin{equation}\label{eq:inversion}
  f(k)=\frac1R\sum_{s\bmod R}\widehat f(s)\e\left(-\frac{sk}R\right)
\end{equation}
by the orthogonality of additive characters modulo $R$.  Choose $\bar r$
and $\bar m$ with $r\bar r\equiv1\pmod m$ and $m\bar m\equiv1\pmod r$.
As $u$ runs modulo $m$ and $v$ runs modulo $r$, $x=ur\bar r+vm\bar m$ runs
over a complete residue system modulo $R$ with $x\equiv u\pmod m$ and
$x\equiv v\pmod r$, whence
\[
  \widehat f(s)
  =\sum_{u\bmod m}\chi_{pp'}(u)\e\left(\frac{s\bar ru}m\right)
   \sum_{v\bmod r}\e\left(\frac{(bh+s\bar m)v}r\right)
  =G_m(s\bar r)\cdot r\,\mathbf1_{\{bh+s\bar m\equiv0\ (\mathrm{mod}\ r)\}}.
\]
The congruence $bh+s\bar m\equiv0\pmod r$ is equivalent to
$s+bhm\equiv0\pmod r$, and Lemma~\ref{lem:Gauss} gives
$|G_m(s\bar r)|\le\sqrt m$.  Since $(bm,r)=1$ and $0<|h|<r$, the
congruence is not satisfied by $s=0$.  Represent $s$ by an integer in
$(-R/2,R/2]$ and apply \eqref{eq:inversion} and Lemma~\ref{lem:geometric}:
\[
  \sum_{k\in I}f(k)
  \ll\frac1R\sum_{\substack{0<|s|\le R/2\\ s+bhm\equiv0\ (\mathrm{mod}\ r)}}
  r\sqrt m\cdot\frac R{2|s|},
\]
which is \eqref{eq:completion}.
\end{proof}

\section{The square-sieve estimate}\label{sec:sieve}

In this section $\alpha$ is given by a rational approximation
\begin{equation}\label{eq:minor-approx}
  \alpha=\frac br+\beta,\qquad (b,r)=1,\qquad Q<r\le\frac NQ,\qquad
  |\beta|\le\frac Q{rN},
\end{equation}
where
\begin{equation}\label{eq:Q-range}
  2\le Q\le\frac{N^{1/2}}{64}.
\end{equation}
This is the situation produced by Dirichlet's theorem on the minor arcs
$\Minor(Q)$; see Section~\ref{sec:minor}.  We estimate the dyadic sums
\eqref{eq:dyadic-intro} in the range
\begin{equation}\label{eq:D-range}
  \frac NQ\le D^2\le Q^2,\qquad M:=\frac N{D^2},
\end{equation}
which is the range in which \cite{BP} and \cite{Tolev} lose a factor
$N^{\eps}$.  Note that \eqref{eq:minor-approx}--\eqref{eq:D-range} give
\begin{equation}\label{eq:M-r}
  1\le M\le Q<r\le N.
\end{equation}

\begin{proposition}\label{prop:sieve}
There are absolute constants $C_0\ge1$ and $N_0\ge2$ with the following
property.  Let $N\ge N_0$, and suppose that \eqref{eq:minor-approx},
\eqref{eq:Q-range}, and \eqref{eq:D-range} hold.  Let $P$ be a real number
with
\begin{equation}\label{eq:P-range}
  C_0L\le P\le N^{1/2}.
\end{equation}
Then
\begin{equation}\label{eq:sieve-bound}
  \sum_{d\sim D}\min\left(M,\frac1{2\|\alpha d^2\|}\right)
  \ll L^4\left(\frac N{Pr}+\frac rP+\frac{D^2}P+PM\right).
\end{equation}
\end{proposition}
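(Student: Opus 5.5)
The plan is to run Heath-Brown's square sieve on the variable $k=d^2$, majorize $\min(M,(2\|x\|)^{-1})$ by the Fej\'er polynomial $G_M$ of Lemma~\ref{lem:majorant}, and estimate the resulting character sums, after completion, by counting representations $\ell=hpp'$.

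\textbf{Sieve set-up.} Let $\mathcal P$ be the set of primes in $(P,2P]$ and $\Pi=|\mathcal P|\gg P/\log P$ (Lemma~\ref{lem:primes}, with $N_0$ large). Write
\[
  \sum_{d\sim D}\min\Bigl(M,\frac1{2\|\alpha d^2\|}\Bigr)
  =\sum_{D^2<k\le4D^2}\mathbf1_{\{k=\square\}}\min\Bigl(M,\frac1{2\|\alpha k\|}\Bigr).
\]
If $k\le4N$ is a square, then $(\tfrac kp)=1$ for all $p\in\mathcal P$ except the at most $\log(4N)/\log P$ primes dividing $k$. Taking $C_0$ large makes this number less than $\Pi/2$, so $\sum_{p}(\tfrac kp)\ge\Pi/2$. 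Hence $\mathbf1_{\{k=\square\}}\le4\Pi^{-2}\bigl(\sum_p(\tfrac kp)\bigr)^2$. Since $G_M\ge0$, \eqref{eq:G-majorizes} bounds the sum by
\[
  \frac4{\Pi^2}\sum_{p,p'\in\mathcal P}\ \sum_{D^2<k\le4D^2}\Bigl(\frac kp\Bigr)\Bigl(\frac k{p'}\Bigr)G_M(\alpha k).
\]

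\textbf{Diagonal and exceptional pairs.} For $p=p'$ I use positivity: the inner sum is at most $T:=\sum_k G_M(\alpha k)$. Expanding $G_M$ and applying Lemma~\ref{lem:geometric} and Lemma~\ref{lem:spacing} (with $H=M/2$, $K=4D^2$) gives
\[
  T\ll D^2L+L\bigl(MD^2/r+M+D^2+r\bigr)L\ll L^2(N/r+D^2+r),
\]
using $MD^2=N$ and $M\le r$. The diagonal therefore contributes $\Pi^{-1}T\ll L^3(N/(Pr)+D^2/P+r/P)$.

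Pairs with $p\mid r$ or $p'\mid r$ number at most $2\Pi\log r/\log P$. Bounding each trivially by $T$ gives the same order as the diagonal, up to one more factor $L/\log P$, which is still within $L^4$.

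\textbf{Off-diagonal, coprime pairs.} Here $m=pp'$ with $(m,r)=1$. Expand $G_M(\alpha k)=\sum_{|h|\le M/2}c_h\e(h\alpha k)$.

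For $h=0$, Lemma~\ref{lem:PV} gives $\ll P\log P$ per pair. The total is $\ll LP\log P\le L^2PM$.

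For $h\ne0$, I remove $\e(h\beta k)$ by partial summation. Its total variation is $\ll|h\beta|D^2\le(M/2)(Q/(rN))4D^2=2Q/r<2$, so this is harmless, and $|h|\le M/2<r$ by \eqref{eq:M-r}. Lemma~\ref{lem:completion} then bounds the inner sum by $r\sqrt m\sum_{s\equiv-bhm\,(r),\,0<|s|\le mr/2}|s|^{-1}$. Splitting off the smallest element of the residue class, this is
\[
  \ll \frac{\sqrt m}{\|bhm/r\|}+\sqrt m\,L ,
\]
where the first term is interpreted as absent when $r\mid hm$. The second term summed over $|h|\le M/2$ and all pairs, weighted by $c_h\ll L$ and divided by $\Pi^2$, gives $\ll PML^2$.

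\textbf{Main obstacle.} The first term is where the restricted factorization is used, and I expect it to be the crux. Put $\ell=|h|pp'\le MP^2\cdot2$. Each $\ell$ has at most $(\log\ell/\log P)^2\ll(L/\log P)^2$ representations of this form, because $p$ and $p'$ are prime divisors of $\ell$ exceeding $P$. With $\sqrt m\le2P$ this gives
\[
  \sum_{p\ne p'}\sum_{h}\frac{\sqrt m}{\|bhm/r\|}
  \ll P\Bigl(\frac L{\log P}\Bigr)^2\sum_{\ell\le 2MP^2}\min\Bigl(r,\frac1{\|b\ell/r\|}\Bigr)
  \ll P\Bigl(\frac L{\log P}\Bigr)^2(MP^2+r)L ,
\]
by Lemma~\ref{lem:spacing} with $\beta=0$. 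Multiplying by $c_h\ll L$ and dividing by $\Pi^2\gg P^2/\log^2P$ yields $\ll L^4(PM+r/P)$.

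Collecting the diagonal, exceptional, $h=0$ and $h\ne0$ contributions gives \eqref{eq:sieve-bound}. The points needing care are the $\log P$ bookkeeping, which should make the powers of $\log P$ cancel against those in $\Pi$, and the classes with $r\mid hm$, whose contribution is only $O(\sqrt m\,L)$ and is already absorbed by the second term.
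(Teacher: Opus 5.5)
Your proposal is correct and follows the paper's proof essentially step by step. The common steps are: the square sieve with primes in $(P,2P]$, the Fej\'er majorant $G_M$, positivity for the diagonal, P\'olya--Vinogradov for $h=0$, partial summation plus Lemma~\ref{lem:completion} for $h\ne0$, and the bound $(\log\ell/\log P)^2$ on the number of representations $\ell=hpp'$, which cancels the $(\log P)^2$ coming from $|\mathcal P|^{-2}$. The differences are only cosmetic. The paper excludes primes dividing $r$ from $\mathcal P$ at the outset instead of discarding those pairs trivially. It also fixes $s$ and counts $\ell$ in one residue class modulo $r$, whereas you first sum over $s$ and then apply Lemma~\ref{lem:spacing} with $\beta=0$.
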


\begin{proof}
Let $c$ and $y_0$ be the constants of Lemma~\ref{lem:primes}.  Fix
$C_0\ge8/c$, and choose $N_0$ so that $C_0\log 2N\ge\max(y_0,3)$ and
$C_0\log2N\le N^{1/2}$ for all $N\ge N_0$.

\medskip\noindent\emph{The sieve weight.}
Let
\[
  \mathcal P:=\{p:\ P<p\le2P,\ p\nmid r\},\qquad A:=|\mathcal P|.
\]
All primes in $\mathcal P$ are odd, since $P\ge3$.  The distinct primes
exceeding $P$ that divide $r$ have product at most $r\le N$, so there are
at most $\log r/\log P\le L/\log P$ of them, and Lemma~\ref{lem:primes}
together with $P\ge C_0L\ge8L/c$ gives
\begin{equation}\label{eq:A-lower}
  A\ge c\frac P{\log P}-\frac L{\log P}\ge\frac{cP}{2\log P},
  \qquad\text{so that}\qquad
  \frac1A\ll\frac{\log P}P.
\end{equation}
Following Heath-Brown~\cite{HB} and Tolev~\cite{Tolev}, define
\begin{equation}\label{eq:kappa}
  \kappa(k):=\frac1{A^2}\bigg|\sum_{p\in\mathcal P}\left(\frac kp\right)\bigg|^2
  \qquad(k\in\Z),
\end{equation}
so that $\kappa(k)\ge0$.  If $k=d^2$ with $1\le d\le2D\le N$, then
$(\tfrac{d^2}p)=1$ unless $p\mid d$, and the primes in $\mathcal P$
dividing $d$ number at most $\log d/\log P\le L/\log P\le A/2$, by
\eqref{eq:A-lower} and $P\ge C_0L$.  Hence
\begin{equation}\label{eq:kappa-square}
  \kappa(d^2)\ge\frac14\qquad(1\le d\le2D).
\end{equation}
In \cite{Tolev} the primes are taken in a range $N^{\eta}\le P\le N^{1/2}$;
the argument above shows that $P\ge C_0L$ suffices for
\eqref{eq:kappa-square}, and the smaller admissible values of $P$ are
essential below.

\medskip\noindent\emph{The majorant.}
Let $G_M$ be the polynomial of Lemma~\ref{lem:majorant}, and write
\begin{equation}\label{eq:G-here}
  G_M(x)=\sum_{|h|\le H}c_h\e(hx),\qquad H\le\frac M2<r,\qquad
  0\le c_h=c_{-h}\le12L,
\end{equation}
where $H<r$ follows from \eqref{eq:M-r} and $c_h\le12\log2M\le12L$ from
$M\le N$.  Since $G_M\ge0$ and $\kappa\ge0$, Lemma~\ref{lem:majorant} and
\eqref{eq:kappa-square} give
\begin{equation}\label{eq:Y-to-Z}
  \sum_{d\sim D}\min\left(M,\frac1{2\|\alpha d^2\|}\right)
  \le4\sum_{d\sim D}\kappa(d^2)G_M(\alpha d^2)
  \le4\sum_{D^2<k\le4D^2}\kappa(k)G_M(\alpha k),
\end{equation}
where in the last step we have used that the squares $d^2$, $d\sim D$,
are distinct integers in $(D^2,4D^2]$.  Tolev uses instead a smooth
majorant whose Fourier series is truncated at $|h|\le M^{1+\eps}$; the
polynomial $G_M$ has exact degree at most $M/2$, and the inequality
$H<r$ is what allows the zero frequency to be the only exceptional one in
Lemma~\ref{lem:completion}.  Expanding \eqref{eq:kappa}, the right-hand
side of \eqref{eq:Y-to-Z} equals
\begin{equation}\label{eq:expanded}
  \frac4{A^2}\sum_{p,p'\in\mathcal P}\ \sum_{D^2<k\le4D^2}
  \left(\frac kp\right)\left(\frac k{p'}\right)G_M(\alpha k).
\end{equation}
We separate the terms with $p=p'$ from those with $p\ne p'$.

\medskip\noindent\emph{The diagonal.}
Since $(\tfrac kp)^2\le1$ and $G_M\ge0$, the terms with $p=p'$ contribute
at most
\[
  \frac4A\sum_{D^2<k\le4D^2}G_M(\alpha k)
\]
to \eqref{eq:expanded}.  This is the point at which we differ from
\cite{Tolev}, where the condition $p\nmid k$ is retained and the terms with
$p\mid k$ are removed by a second application of the geometric-sum
estimate, which introduces the divisor function; here the diagonal is
simply majorized.  Opening $G_M$, using $c_h=c_{-h}$, and applying
Lemma~\ref{lem:geometric} to the sum over $k$,
\begin{align*}
  \sum_{D^2<k\le4D^2}G_M(\alpha k)
  &=c_0\cdot\#\{k:\ D^2<k\le4D^2\}
   +2\,\mathrm{Re}\sum_{1\le h\le H}c_h\sum_{D^2<k\le4D^2}\e(\alpha hk)\\
  &\ll L\bigg(D^2+\sum_{1\le h\le H}\min\Big(3D^2,\frac1{2\|\alpha h\|}\Big)\bigg).
\end{align*}
By \eqref{eq:minor-approx}, $|\beta|\le Q/(rN)\le r^{-2}$ since
$r\le N/Q$, so Lemma~\ref{lem:spacing} applies with $K=3D^2$; as
$HK\le\tfrac32MD^2=\tfrac32N$, $H\le M$, and $\log 2r\le L$, it gives
\[
  \sum_{D^2<k\le4D^2}G_M(\alpha k)\ll L^2\left(D^2+\frac Nr+M+r\right).
\]
By \eqref{eq:A-lower}, $\log P\le L$, and $M\le r$, the diagonal
contribution is
\begin{equation}\label{eq:diag-final}
  \ll L^3\left(\frac N{Pr}+\frac{D^2}P+\frac rP\right).
\end{equation}

\medskip\noindent\emph{The off-diagonal terms: zero frequency.}
For $p\ne p'$ we write $\chi_{pp'}$ as in \eqref{eq:chi-def}.  The terms
$h=0$ of $G_M$ contribute
\[
  \frac{4c_0}{A^2}\sum_{\substack{p,p'\in\mathcal P\\p\ne p'}}
  \sum_{D^2<k\le4D^2}\chi_{pp'}(k)
\]
to \eqref{eq:expanded}.  By Lemma~\ref{lem:PV} with $m=pp'\le4P^2$,
each inner sum is $\ll P\log 2P\ll PL$, so this contribution is
\begin{equation}\label{eq:off-zero}
  \ll\frac L{A^2}\cdot A^2\cdot PL=PL^2\le PML^2,
\end{equation}
using $M\ge1$.

\medskip\noindent\emph{The off-diagonal terms: nonzero frequencies.}
For $p\ne p'$ and $1\le h\le H$ put
\[
  T_{p,p'}(h):=\sum_{D^2<k\le4D^2}\chi_{pp'}(k)\e(\alpha hk).
\]
Since $\chi_{pp'}$ is real, $T_{p,p'}(-h)=\overline{T_{p,p'}(h)}$, and
as $c_{-h}=c_h$ it suffices to bound
\begin{equation}\label{eq:off-nonzero-start}
  \frac8{A^2}\sum_{\substack{p,p'\in\mathcal P\\p\ne p'}}
  \sum_{1\le h\le H}c_h|T_{p,p'}(h)|.
\end{equation}
To remove $\beta$, let
$A_{p,p'}(t)=\sum_{D^2<k\le t}\chi_{pp'}(k)\e(bhk/r)$ for
$D^2\le t\le4D^2$.  Partial summation gives
\[
  T_{p,p'}(h)=A_{p,p'}(4D^2)\e(4\beta hD^2)
  -2\pi i\beta h\int_{D^2}^{4D^2}A_{p,p'}(t)\e(\beta ht)\,dt,
\]
and hence
$|T_{p,p'}(h)|\ll(1+|\beta|hD^2)\max_{D^2\le t\le4D^2}|A_{p,p'}(t)|$.
By \eqref{eq:minor-approx}, \eqref{eq:G-here}, and \eqref{eq:D-range},
\[
  |\beta|hD^2\le\frac Q{rN}\cdot\frac M2\cdot D^2=\frac Q{2r}<1.
\]
Since $p,p'\nmid r$, $(b,r)=1$, and $1\le h\le H<r$,
Lemma~\ref{lem:completion} applies to each interval $(D^2,t]$; with
$m=pp'\le4P^2$ and $\sqrt m\le2P$ it gives
\begin{equation}\label{eq:T-completed}
  |T_{p,p'}(h)|\ll rP\sum_{\substack{0<|s|\le2P^2r\\ s+bhpp'\equiv0\ (\mathrm{mod}\ r)}}\frac1{|s|}.
\end{equation}

We now count the triples $(h,p,p')$ according to the value of
$\ell=hpp'$.  Let
\[
  R(\ell):=\#\{(h,p,p'):\ 1\le h\le H,\ p,p'\in\mathcal P,\ p\ne p',\ hpp'=\ell\}.
\]
Given $\ell$ and an ordered pair $(p,p')$ of distinct primes of
$\mathcal P$ dividing $\ell$, the integer $h$ is determined.  If
$\nu_{\mathcal P}(\ell)$ denotes the number of primes of $\mathcal P$
dividing $\ell$, then the product of these primes is at least
$P^{\nu_{\mathcal P}(\ell)}$ and at most $\ell$, and therefore
\begin{equation}\label{eq:R-bound}
  R(\ell)\le\nu_{\mathcal P}(\ell)^2\le\left(\frac{\log\ell}{\log P}\right)^2.
\end{equation}
Tolev counts the same triples with the divisor function
$\tau^2(\ell)\ll\ell^{\eps}$; the restriction of $p,p'$ to $(P,2P]$ is
what makes \eqref{eq:R-bound} available.  Moreover
$\ell=hpp'\le H(2P)^2\le2P^2M\le2N^{3/2}$, so $\log\ell\ll L$.  For
fixed $s$, the condition $s+b\ell\equiv0\pmod r$ places $\ell$ in a single
residue class modulo $r$, since $(b,r)=1$, and an interval of length
$2P^2M$ contains at most $1+2P^2M/r$ integers of that class.  Hence
\begin{equation}\label{eq:ell-count}
  \sum_{\substack{\ell\le2P^2M\\ s+b\ell\equiv0\ (\mathrm{mod}\ r)}}R(\ell)
  \ll\frac{L^2}{(\log P)^2}\left(1+\frac{P^2M}r\right).
\end{equation}
Inserting \eqref{eq:T-completed} into \eqref{eq:off-nonzero-start},
interchanging the order of summation, and using \eqref{eq:ell-count},
$c_h\le12L$, and $\sum_{0<|s|\le2P^2r}|s|^{-1}\ll\log(4P^2r)\ll L$
(recall $P\le N^{1/2}$ and $r\le N$), we find that
\eqref{eq:off-nonzero-start} is
\[
  \ll\frac{LrP}{A^2}\sum_{0<|s|\le2P^2r}\frac1{|s|}
  \sum_{\substack{\ell\le2P^2M\\ s+b\ell\equiv0\ (\mathrm{mod}\ r)}}R(\ell)
  \ll\frac{LrP}{A^2}\cdot L\cdot\frac{L^2}{(\log P)^2}\left(1+\frac{P^2M}r\right)
  \ll L^4\left(\frac rP+PM\right),
\]
by \eqref{eq:A-lower}.  Together with \eqref{eq:Y-to-Z},
\eqref{eq:diag-final}, and \eqref{eq:off-zero}, this proves
\eqref{eq:sieve-bound}.
\end{proof}

\section{The minor arcs}\label{sec:minor}

We first prove a pointwise estimate under the hypothesis
\eqref{eq:minor-approx}, and then deduce Theorem~\ref{thm:minor}.

\begin{proposition}\label{prop:minor}
Let $N\ge2$, let $2\le Q\le N^{1/2}/64$, and suppose that
\eqref{eq:minor-approx} holds.  Then
\begin{equation}\label{eq:minor-prop}
  |S(\alpha)|\ll\frac NQL^5,
\end{equation}
where the implied constant is absolute.
\end{proposition}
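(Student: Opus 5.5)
We begin from the identity \eqref{eq:mu2}, writing
\[
  S(\alpha)=\sum_{d\le N^{1/2}}\mu(d)\sum_{m\le N/d^2}\e(\alpha d^2m),
\]
and bound the inner sum by Lemma~\ref{lem:geometric}, so that
\[
  |S(\alpha)|\le\sum_{d\le N^{1/2}}\min\Big(\frac N{d^2},\frac1{2\|\alpha d^2\|}\Big).
\]
We split $d$ into $O(L)$ dyadic blocks $d\sim D$ with $D\le N^{1/2}$. On each block the summand is at most $\min(M,(2\|\alpha d^2\|)^{-1})$ with $M=N/D^2$. It therefore suffices to show that each dyadic sum \eqref{eq:dyadic-intro} is $\ll NQ^{-1}L^4$. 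We treat four ranges of $D$.

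\emph{Large $D$: $D^2>Q^2$.} The trivial bound gives $\sum_{d\sim D}N/d^2\ll N/D\le N/Q$.

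\emph{Middle range: $N/Q\le D^2\le Q^2$.} We apply Proposition~\ref{prop:sieve}, assuming $N\ge N_0$; otherwise the statement is trivial. It gives
\[
  L^4\Big(\frac N{Pr}+\frac rP+\frac{D^2}P+PM\Big).
\]
Using $r>Q$, $r\le N/Q$, $D^2\le Q^2$ and $M\le Q$, this is
\[
  \ll L^4\Big(\frac N{PQ}+\frac{Q^2}P+PQ\Big).
\]
We choose $P=\max(C_0L,(N/Q^2)^{1/2})$. Then $N/(PQ)\le N^{1/2}$, $Q^2/P\le N^{1/2}$, and $PQ\le N^{1/2}+C_0LQ$. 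Since $Q\le N^{1/2}$, each of these is $\ll N/Q$ after possibly one more factor of $L$. The check is as follows. First, $N^{1/2}\le N/Q$. Second, $C_0LQ\ll LN/Q$ because $Q^2\le N$. This leaves $L^5$ overall, which is acceptable. We also need $P\le N^{1/2}$, which is clear.

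\emph{Small range: $D^2\le Q/8$.} We follow \cite{BP}. Write $\alpha d^2=bd^2/r+\beta d^2$. We have $|\beta d^2|\le Q\cdot Q/(8rN)$, which is tiny. Moreover $r>Q\ge 8D^2$, so $r\nmid bd^2$ and hence $\|bd^2/r\|\ge1/r$. The perturbation is at most $1/(2r)$, so $\|\alpha d^2\|\ge1/(2r)$. Hence each term is at most $r\le N/Q$. That alone only yields $DN/Q$, so we must do better. Instead, the values $bd^2 \bmod r$ for distinct $d$ must be grouped: the map $d\mapsto d^2$ into residues modulo $r$ has fibres of size at most $\tau$-type bounds. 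To avoid divisor bounds, we use that $d^2\le 4D^2<r$. Hence the $d^2$ are distinct integers in $(0,r)$, so $bd^2$ are distinct residues. Distinct residues give spaced points, and Lemma~\ref{lem:spacing}-type counting then gives $\sum\ll r\log r\ll NQ^{-1}L$.

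\emph{Intermediate range: $Q/8<D^2<N/Q$.} We apply Lemma~\ref{lem:spacing} to all $h\le 4D^2$, which contain the squares, with $K=M$. This gives
\[
  \ll\Big(\frac{D^2M}r+D^2+M+r\Big)L\ll\Big(\frac NQ+\frac NQ+\frac{8N}Q+\frac NQ\Big)L,
\]
using $D^2M=N$, $D^2<N/Q$, $M<8N/Q$, and $r\le N/Q$.

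Summing over the $O(L)$ dyadic blocks, the middle range dominates. As written its per-block bound is $L^5$, which would give $L^6$ after summation, so we must be careful. To recover $L^5$ in total, we note that the middle range contains only $O(1+\log(Q^3/N))$ blocks. Alternatively, and this is the main obstacle, we refine the choice of $P$ so that each term is $\ll N/(QL)$ or better. The refinement exploits the slack: $N/(PQ)$ and $Q^2/P$ are $\le N^{1/2}\le N/(Q\cdot 64)$. The term $PQ$ with $P=(N/Q^2)^{1/2}$ is $N^{1/2}$, and the $C_0LQ$ branch occurs only when $N/Q^2\le C_0^2L^2$, that is, $Q\gg N^{1/2}/L$. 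In that case $LQ\ll N/Q$ is equivalent to $Q^2\ll N/L$, which may fail by a factor of $L$. I expect to handle this edge case by accepting $L^5$ there, since in that case the middle range $N/Q\le D^2\le Q^2$ has $O(\log L)$ blocks. I also expect the final bookkeeping of log powers in this way to be the delicate point.
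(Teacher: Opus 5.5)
Your treatment of $D>Q$, $D^2\le Q/8$ and $Q/8<D^2<N/Q$ is essentially the paper's Cases~1--3. The middle range $N/Q\le D^2\le Q^2$, however, fails as written, so there is a genuine gap.

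\textbf{Where the middle range breaks.} You bound $D^2\le Q^2$ and $M\le Q$ separately and then choose $P=\max(C_0L,N^{1/2}/Q)$, which does not depend on $D$. You then assert $Q^2/P\le N^{1/2}$, and this is false. From $P\ge N^{1/2}/Q$ you only get $Q^2/P\le Q^3/N^{1/2}$. For $Q\asymp N^{1/2}$ your choice gives $P=C_0L$, so the term $L^4\,Q^2/P$ is $\asymp L^3N$, which is worse than the trivial bound.

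No $D$-independent choice of $P$ can rescue this. Requiring $Q^2/P\ll N/Q$ and $PQ\ll N/Q$ (even up to powers of $L$) forces $Q^3/N\lesssim P\lesssim N/Q^2$. That cannot hold once $Q$ is substantially larger than $N^{2/5}$, and in particular not for $Q$ near $N^{1/2}$, which is the case the proposition exists for. The two bounds $D^2\le Q^2$ and $M\le Q$ are never attained together, since $M=N/D^2$. Decoupling them destroys the balance between $D^2/P$ and $PM$ on which \eqref{eq:sieve-bound} relies.

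Your fallback remark is also incorrect. When $Q\gg N^{1/2}/L$ the middle range contains $\asymp\log(Q^3/N)\asymp L$ dyadic blocks, not $O(\log L)$.

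\textbf{The missing idea.} Let $P$ depend on $D$: take $P=\max(C_0L,D^2/N^{1/2})$, which satisfies \eqref{eq:P-range} because $D^2\le Q^2\le N$.

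For the blocks with $P=D^2/N^{1/2}$, all four terms are at most $N^{1/2}$:
\begin{itemize}
\item $D^2/P=PM=N^{1/2}$;
\item $N/(Pr)=N^{3/2}/(rD^2)\le N^{1/2}$ and $r/P=rN^{1/2}/D^2\le N^{1/2}$, using $Q<r\le N/Q$ and $D^2\ge N/Q$.
\end{itemize}
So these $O(L)$ blocks contribute $\ll L^5N^{1/2}\le L^5N/Q$.

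The blocks with $P=C_0L$ satisfy $N/Q\le D^2\le C_0LN^{1/2}$, a range whose endpoints have ratio at most $C_0L/64$. So there are only $O(\log 2L)$ of them; this is the correct form of your $O(\log L)$ remark. Summing $D^2$ and $D^{-2}$ over these blocks as geometric progressions gives
$\ll L^4\bigl((\log 2L)L^{-1}(N/r+r)+N^{1/2}+LQ\bigr)\ll L^5N/Q$.

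This is how the total stays at $L^5$ and resolves the bookkeeping you flagged. The factor $L$ beyond $L^4$ comes either from $O(L)$ blocks of size $L^4N^{1/2}$, or from $P=C_0L$ inside a geometrically summed term. It never comes from multiplying a per-block bound of $L^5N/Q$ by the number of blocks.
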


\begin{proof}
Let $C_0$ and $N_0$ be the constants of Proposition~\ref{prop:sieve}.  If
$N<N_0$, then \eqref{eq:minor-prop} follows from the trivial estimate
$|S(\alpha)|\le N$, since $Q\le N_0^{1/2}$; so let $N\ge N_0$.  From
$\mu^2(n)=\sum_{d^2\mid n}\mu(d)$ and Lemma~\ref{lem:geometric},
\begin{equation}\label{eq:S-dyadic}
  |S(\alpha)|
  =\bigg|\sum_{d\le N^{1/2}}\mu(d)\sum_{m\le N/d^2}\e(\alpha d^2m)\bigg|
  \le\sum_{d\le N^{1/2}}\min\left(\frac N{d^2},\frac1{2\|\alpha d^2\|}\right).
\end{equation}
For $j\ge0$ let $D=2^{j-1}$; the intervals $D<d\le2D$ cover the positive
integers, the first of them containing only $d=1$, and $O(L)$ of them meet
$[1,N^{1/2}]$.  For such $D$ put
\begin{equation}\label{eq:Y-def}
  \mathcal Y(D):=\sum_{D<d\le\min(2D,N^{1/2})}
  \min\left(\frac N{D^2},\frac1{2\|\alpha d^2\|}\right),
\end{equation}
so that the right-hand side of \eqref{eq:S-dyadic} is at most
$\sum_D\mathcal Y(D)$, because $N/d^2\le N/D^2$ for $d>D$.  We distinguish
four ranges of $D$.  Since $Q^2\le N$, one has $Q/64<N/Q$, so the ranges
$D>Q$, $D^2<Q/64$, $Q/64\le D^2<N/Q$, and $N/Q\le D^2\le Q^2$ are
exhaustive.

\medskip\noindent\emph{Case 1: $D>Q$.}
The sum $\mathcal Y(D)$ has at most $D$ terms, each at most $N/D^2$, so
$\mathcal Y(D)\le N/D$, and summing over the dyadic values $D>Q$,
\begin{equation}\label{eq:case1}
  \sum_{D>Q}\mathcal Y(D)\ll\frac NQ.
\end{equation}

The next two cases are treated as in \cite[p.~595--596]{BP}.

\medskip\noindent\emph{Case 2: $D^2<Q/64$.}
For $D<d\le2D$, \eqref{eq:minor-approx} and $Q^2\le N$ give
\begin{equation}\label{eq:small-beta}
  |\beta|d^2\le\frac{4QD^2}{rN}<\frac{Q^2}{16rN}\le\frac1{16r}.
\end{equation}
The residues $bd^2\pmod r$, $D<d\le2D$, are distinct and nonzero: if
$d_1\ne d_2$ then $0<|d_1^2-d_2^2|<4D^2<Q/16<r$, so $r\nmid d_1^2-d_2^2$,
and $0<d^2<r$ with $(b,r)=1$ shows $r\nmid bd^2$.  Write the least
absolute residue of $bd^2$ modulo $r$ as $\pm j$ with $1\le j\le r/2$.
By \eqref{eq:small-beta},
\[
  \|\alpha d^2\|\ge\frac jr-\frac1{16r}\ge\frac{15j}{16r},
\]
and each $j$ arises from at most two values of $d$.  Hence, using
$r\le N/Q$ and $\log2r\le L$,
\begin{equation}\label{eq:case2}
  \mathcal Y(D)\ll r\sum_{1\le j\le r/2}\frac1j\ll rL\le\frac NQL.
\end{equation}

\medskip\noindent\emph{Case 3: $Q/64\le D^2<N/Q$.}
Since the squares $d^2$, $D<d\le2D$, are distinct integers in
$(D^2,4D^2]$, and $|\beta|\le r^{-2}$ by $r\le N/Q$,
Lemma~\ref{lem:spacing} with $H=4D^2$ and $K=N/D^2$ gives
\begin{equation}\label{eq:case3}
  \mathcal Y(D)\le\sum_{1\le k\le4D^2}\min\left(\frac N{D^2},\frac1{2\|\alpha k\|}\right)
  \ll\left(\frac Nr+D^2+\frac N{D^2}+r\right)L\ll\frac NQL,
\end{equation}
where the last step uses $Q<r\le N/Q$, $D^2<N/Q$, and
$N/D^2\le64N/Q$.

There are $O(L)$ dyadic values of $D$ in Cases~2 and~3 together, so their
total contribution is $O(NQ^{-1}L^2)$.

\medskip\noindent\emph{Case 4: $N/Q\le D^2\le Q^2$.}
Here $2D\le2Q<N^{1/2}$, so $\mathcal Y(D)$ is the sum on the left of
\eqref{eq:sieve-bound}, with $M=N/D^2$.  Put
\[
  P:=\max\left(C_0L,\frac{D^2}{N^{1/2}}\right).
\]
Since $N\ge N_0$ we have $C_0L\le N^{1/2}$, and
$D^2/N^{1/2}\le Q^2/N^{1/2}\le N^{1/2}$, so \eqref{eq:P-range} holds.
All hypotheses of Proposition~\ref{prop:sieve} are satisfied, and
\begin{equation}\label{eq:case4-start}
  \mathcal Y(D)\ll L^4\left(\frac N{Pr}+\frac rP+\frac{D^2}P+\frac{NP}{D^2}\right).
\end{equation}
We sum \eqref{eq:case4-start} over the dyadic $D$ in Case~4, according
to which term defines $P$.

Let $\mathcal D_0$ be the set of dyadic $D$ in Case~4 with
$D^2\le C_0LN^{1/2}$, so that $P=C_0L$.  If $\mathcal D_0$ is nonempty,
the ratio of the largest to the smallest value of $D^2$ in it is at most
$C_0LN^{1/2}/(N/Q)=C_0LQ/N^{1/2}\le C_0L/64$, so
$|\mathcal D_0|\ll\log 2L$, while
$\sum_{D\in\mathcal D_0}D^2\ll C_0LN^{1/2}$ and
$\sum_{D\in\mathcal D_0}D^{-2}\ll Q/N$ by summing geometric progressions.
Therefore
\begin{equation}\label{eq:case4-D0}
  \sum_{D\in\mathcal D_0}\mathcal Y(D)
  \ll L^4\bigg(\frac{\log 2L}{C_0L}\Big(\frac Nr+r\Big)+N^{1/2}+C_0LQ\bigg)
  \ll\frac NQL^5,
\end{equation}
since $N/r<N/Q$, $r\le N/Q$, $N^{1/2}\le N/Q$, and $Q\le N/Q$.

Let $\mathcal D_1$ be the set of the remaining dyadic $D$ in Case~4, for
which $P=D^2/N^{1/2}$.  Then
\[
  \frac N{Pr}=\frac{N^{3/2}}{rD^2},\qquad
  \frac rP=\frac{rN^{1/2}}{D^2},\qquad
  \frac{D^2}P=\frac{NP}{D^2}=N^{1/2}.
\]
Since $D^2\ge N/Q$ throughout Case~4, $\sum_{D\in\mathcal D_1}D^{-2}\ll Q/N$,
and $|\mathcal D_1|\ll L$.  Hence
\begin{equation}\label{eq:case4-D1}
  \sum_{D\in\mathcal D_1}\mathcal Y(D)
  \ll L^4\Big(\frac{N^{1/2}Q}r+\frac{rQ}{N^{1/2}}+N^{1/2}L\Big)
  \ll L^5N^{1/2}\le\frac NQL^5,
\end{equation}
because $r>Q$ and $r\le N/Q$ make the first two terms at most $N^{1/2}$.

Adding \eqref{eq:case1}, the $O(L)$ contributions \eqref{eq:case2} and
\eqref{eq:case3}, and \eqref{eq:case4-D0}--\eqref{eq:case4-D1} proves
\eqref{eq:minor-prop}.
\end{proof}

\begin{proof}[Proof of Theorem~\ref{thm:minor}]
If $Q<128$, the trivial estimate $|S(\alpha)|\le N$ gives
\eqref{eq:minor-bound}.  Let $Q\ge128$ and put $Q_0=Q/64$, so that
$2\le Q_0\le N^{1/2}/64$.  Let $\alpha\in\Minor(Q)$.  By
Lemma~\ref{lem:Dirichlet} with $H=N/Q_0$, there are coprime $b,r$ with
$1\le r\le N/Q_0$ and $|r\alpha-b|\le Q_0/N$.  If $r\le Q_0$, then
$\alpha\in\Major(Q_0)\subseteq\Major(Q)$, which is excluded; hence
$r>Q_0$.  Writing $\alpha=b/r+\beta$ we have $|\beta|\le Q_0/(rN)$, so
\eqref{eq:minor-approx} holds with $Q_0$ in place of $Q$, and
Proposition~\ref{prop:minor} gives
$|S(\alpha)|\ll NQ_0^{-1}L^5\ll NQ^{-1}L^5$.
\end{proof}

\section{Proof of Theorem~\ref{thm:main}}\label{sec:rational}

Let $N\ge2$, and let $a,q,\alpha$ satisfy \eqref{eq:main-hyp}.  If $q>N$
the right-hand side of \eqref{eq:main-bound} exceeds the trivial estimate
$|S(\alpha)|\le N$, so let $1\le q\le N$ and put
\begin{equation}\label{eq:Q-choice}
  Q:=\frac1{64}\min\left(q,\frac Nq\right).
\end{equation}
If $Q<2$, then $\max(q,N/q)>N/128$, and \eqref{eq:main-bound} again
follows from the trivial estimate, since $L^5\gg1$.  So let $Q\ge2$; then
$Q\le N^{1/2}/64$ because $\min(q,N/q)\le N^{1/2}$.

By Lemma~\ref{lem:Dirichlet} with $H=N/Q$ there are coprime $b,r$ with
$1\le r\le N/Q$ and $|r\alpha-b|\le Q/N$.  We claim that $r>Q$.  Indeed,
if $r\le Q$, then by \eqref{eq:main-hyp}, \eqref{eq:Q-choice}, and
$r\le Q\le q/64$,
\[
  |ar-bq|=|r(a-q\alpha)+q(r\alpha-b)|
  \le\frac rq+\frac{qQ}N\le\frac1{64}+\frac1{64}<1,
\]
so $ar=bq$.  Since $a/q$ and $b/r$ are both in lowest terms, this forces
$r=q$, contradicting $r\le q/64$.  Hence $r>Q$, and writing
$\alpha=b/r+\beta$ we have $|\beta|\le Q/(rN)$.  Thus
\eqref{eq:minor-approx} holds, and Proposition~\ref{prop:minor} gives
\[
  |S(\alpha)|\ll\frac NQL^5=64\max\left(\frac Nq,q\right)L^5
  \le64\left(\frac Nq+q\right)L^5,
\]
which is \eqref{eq:main-bound}.  Note that nothing in
Sections~\ref{sec:sieve} and~\ref{sec:minor} requires $\alpha\in[0,1]$, so
the theorem holds for all real $\alpha$.
\qed

\section{Sums of three squarefree numbers}\label{sec:three}

We describe what Theorem~\ref{thm:main} yields for $r_3(N)$ within the
argument of Br\"udern and Perelli~\cite[Sections~4--5]{BP}, and where a
factor $N^{\eps}$ remains.  Throughout this section $N$ is a positive
integer, and we use the notation of \cite{BP}.  Let
\begin{equation}\label{eq:S-tilde}
  \tilde S(\alpha):=\sum_{n\ge1}e^{-n/N}\mu^2(n)\e(\alpha n),
\end{equation}
so that, by orthogonality, $\int_0^1\tilde S(\alpha)^\nu\e(-\alpha N)\,d\alpha=e^{-1}r_\nu(N)$
\cite[(34)]{BP}.  Let $G$ be the multiplicative function with
$G(p)=G(p^2)=(1-p^2)^{-1}$ and $G(p^l)=0$ for $l\ge3$
\cite[(10)]{BP}.  If $q$ is cube-free, write $q=q_1q_2^2$ with $q_1,q_2$
squarefree and coprime; then
\begin{equation}\label{eq:G-bound}
  |G(q)|=\prod_{p\mid q_1q_2}\frac1{p^2-1}
  \le\zeta(2)(q_1q_2)^{-2}\le\frac{\zeta(2)}q,
\end{equation}
since $(q_1q_2)^2=qq_1\ge q$; and $G(q)=0$ otherwise.  Br\"udern and
Perelli cover $[N^{-1/2},1+N^{-1/2}]$ by disjoint intervals
$\mathfrak K(q,a)$ with $1\le q\le N^{1/2}$, $(a,q)=1$, each contained in
$\{\alpha:|q\alpha-a|\le N^{-1/2}\}$, and define on $\mathfrak K(q,a)$
\begin{equation}\label{eq:S-star}
  \tilde S^*(\alpha):=\frac6{\pi^2}G(q)\left(\frac1N-2\pi i\left(\alpha-\frac aq\right)\right)^{-1},
  \qquad \Xi(\alpha):=\tilde S(\alpha)-\tilde S^*(\alpha),
\end{equation}
both extended to $\R$ with period $1$ \cite[(35), (36)]{BP}.  Since
$|N^{-1}-2\pi i\beta|\ge\frac12(N^{-1}+2\pi|\beta|)$, \eqref{eq:G-bound}
gives, for $\alpha\in\mathfrak K(q,a)$ and $\beta=\alpha-a/q$,
\begin{equation}\label{eq:S-star-bound}
  |\tilde S^*(\alpha)|\ll\frac Nq\left(1+N|\beta|\right)^{-1}.
\end{equation}
This is the estimate at the top of \cite[p.~607]{BP}, with the factor
$q^{\eps}$ removed by \eqref{eq:G-bound}.

The sets $\Major(Q)$ and $\Minor(Q)$ of \eqref{eq:major-arcs} differ from
the sets $\mathfrak M(Q)$ and $\mathfrak m(Q)$ of \cite{BP} only by a
translation modulo $1$ and by finitely many points, so integrals of
$1$-periodic functions over them coincide, and we use our notation.  The
following statement replaces \cite[Lemma~5 and (37)]{BP}, which are
restricted to $Q\le N^{3/7}$.

\begin{corollary}\label{cor:Xi}
Let $N\ge2$ and $1\le Q\le\frac12N^{1/2}$.  Then
\begin{equation}\label{eq:Xi-sup}
  \sup_{\alpha\in\Minor(Q)}|\tilde S(\alpha)|\ll\frac NQL^5,
  \qquad
  \sup_{\alpha\in\Minor(Q)}|\Xi(\alpha)|\ll\frac NQL^5.
\end{equation}
\end{corollary}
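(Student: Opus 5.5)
We prove the bound for $\tilde S$ first and then deduce the one for $\Xi$ from the triangle inequality $|\Xi|\le|\tilde S|+|\tilde S^*|$.

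\emph{The bound for $\tilde S$.} We pass from $\tilde S$ to the sharp sums $S(\alpha;y)$ by partial summation. Writing $e^{-n/N}=\int_n^\infty N^{-1}e^{-y/N}\,dy$ gives
\[
  \tilde S(\alpha)=\frac1N\int_1^\infty e^{-y/N}S(\alpha;y)\,dy .
\]
Fix $\alpha\in\Minor(Q)$ and split the integral at $y=N$, $y=N^{2}$, and so on.

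For $y\le Q^2$ we use the trivial bound $|S(\alpha;y)|\le y\le Q^2\le N/Q$.

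For $Q^2\le y\le N$ we compare minor arcs. If $|r\alpha-b|\le Q'/y$ with $r\le Q'$, then $|r\alpha-b|\le Q/N$ as soon as $Q'\le Qy/N$. So if we put $Q'=Qy/N$, which is at most $Q$, then $\alpha\in\Minor(Q)$ implies $\alpha\in\Minor(Q';y)$. The condition $Q'\le y^{1/2}$ holds because $Qy/N\le y^{1/2}$ is equivalent to $y\le N^2/Q^2$, and this is true for $y\le N$. Theorem~\ref{thm:minor} applied at length $y$ then gives $S(\alpha;y)\ll (y/Q')L^5=(N/Q)L^5$ whenever $Q'\ge1$. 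If $Q'<1$, i.e.\ $y<N/Q$, the trivial bound $y<N/Q$ suffices.

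For $y>N$ we argue in the same spirit. Dirichlet's theorem gives an approximation $b/r$ with $|r\alpha-b|\le Q/N$ and $r\le N/Q$, and $r>Q$ because $\alpha\in\Minor(Q)$. Theorem~\ref{thm:main} then yields $S(\alpha;y)\ll(y/r+r)(\log 2y)^5\ll (y/Q+N/Q)(\log2y)^5$.

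Integrating these bounds against $N^{-1}e^{-y/N}$, the weight $e^{-y/N}$ absorbs the factors $y/N$ and $(\log 2y)^5/L^5$ for $y>N$. This gives $\tilde S(\alpha)\ll (N/Q)L^5$. The hypothesis $Q\le\frac12N^{1/2}$ leaves room for the harmless constant-factor adjustments of $Q$ needed above.

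\emph{The bound for $\tilde S^*$.} It remains to show $\tilde S^*(\alpha)\ll N/Q$ on $\Minor(Q)$. Take $\alpha\in\mathfrak K(q,a)$, so that $|q\alpha-a|\le N^{-1/2}$ with $q\le N^{1/2}$. By \eqref{eq:S-star-bound} we have $|\tilde S^*|\ll N q^{-1}(1+N|\beta|)^{-1}$. If $q\ge Q$ this is already $\le N/Q$. If $q<Q$, then $\alpha\notin\Major(Q)$ forces $|q\beta|>Q/N$. Hence $N|\beta|>Q/q$, and so $|\tilde S^*|\ll (N/q)(q/Q)=N/Q$.

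The main obstacle is the first step: handling the range $y>N$ and the transfer of the minor-arc condition to other lengths $y$ with uniform constants. Using Theorem~\ref{thm:main} rather than Theorem~\ref{thm:minor} there avoids having to redefine the arcs at length $y$.
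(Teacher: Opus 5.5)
There is one step that fails as written. Your treatment of $\tilde S^*$ is the paper's, and so is your use of Theorem~\ref{thm:main} for $y>N$. The problem is the first range of your bound for $\tilde S$, which rests on a false inequality.

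The claim $Q^2\le N/Q$ is equivalent to $Q^3\le N$, and this fails for $N^{1/3}<Q\le\frac12N^{1/2}$. At $Q=\frac12N^{1/2}$, for instance, $Q^2=N/4$ while $N/Q=2N^{1/2}$. Since $e^{-y/N}\asymp1$ for $y\le N$, the trivial bound on $1\le y\le Q^2$ contributes $\frac1N\int_1^{Q^2}y\,dy\asymp Q^4/N$. This exceeds $NQ^{-1}L^5$ once $Q^5\gg N^2L^5$. So the argument breaks down for $Q$ near $N^{1/2}$, which is exactly the range where the corollary is new.

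The repair is already in your proposal. Your check that $Q'=Qy/N\le y^{1/2}$ used only $y\le N$. So the rescaled minor-arc argument with Theorem~\ref{thm:minor} is valid for every $y\in[N/Q,N]$. The trivial bound is then needed only for $y<N/Q$, where it contributes at most $\frac1N\int_1^{N/Q}y\,dy\le N/(2Q^2)$. Simply drop the split at $Q^2$.

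With that change your proof is correct, but it takes a detour the paper avoids. The paper never rescales the arcs. The single Dirichlet approximation $b/r$ with $Q<r\le N/Q$ and $|r\alpha-b|\le Q/N$ already satisfies $|\alpha-b/r|\le r^{-2}$, because $Q/N\le1/r$. This hypothesis of Theorem~\ref{thm:main} does not depend on the length of the sum, so $|S(\alpha;y)|\ll(y/r+r)(\log2y)^5$ holds for every $y\ge2$. Integrating against $N^{-1}e^{-y/N}$, using $\log2y\le L+y/N$, gives $|\tilde S(\alpha)|\ll(N/r+r)L^5\ll NQ^{-1}L^5$ in one step.

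Your third range is exactly this argument, so it could have been used for all $y$. You should also state the check $|\alpha-b/r|\le r^{-2}$ explicitly, since it is the hypothesis you are invoking. The inclusion of $\Minor(Q)$ at length $N$ in $\Minor(Qy/N)$ at length $y$ is correct, but it buys nothing beyond the uniform application of Theorem~\ref{thm:main}.
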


\begin{proof}
If $Q<128$, both bounds follow from $|\tilde S(\alpha)|\le N+1$ and
\eqref{eq:S-star-bound}.  Let $Q\ge128$ and $\alpha\in\Minor(Q)$.  As in
the proof of Theorem~\ref{thm:minor}, with $Q_0=Q/64$, there are coprime
$b,r$ with $Q_0<r\le N/Q_0$ and $|r\alpha-b|\le Q_0/N\le r^{-1}$, so
Theorem~\ref{thm:main} applies with $a/q=b/r$ and any $x\ge2$ in place of
$N$, and gives $|S(\alpha;x)|\ll(x/r+r)(\log2x)^5$; for $0<x<2$ we have
$|S(\alpha;x)|\le1$.  Partial summation gives
\[
  \tilde S(\alpha)=\frac1N\int_0^\infty S(\alpha;x)\,e^{-x/N}\,dx,
\]
so that
\[
  |\tilde S(\alpha)|\le\frac2N+\frac1N\int_2^\infty|S(\alpha;x)|e^{-x/N}\,dx
  \ll1+\frac1N\int_2^\infty\left(\frac xr+r\right)(\log2x)^5e^{-x/N}\,dx.
\]
Since $\log 2x\le L+x/N$ for $x\ge1$, we have
\[
  \int_2^\infty(\log 2x)^5e^{-x/N}\,dx\ll NL^5,\qquad
  \int_2^\infty x(\log 2x)^5e^{-x/N}\,dx\ll N^2L^5,
\]
whence $|\tilde S(\alpha)|\ll(N/r+r)L^5\ll NQ_0^{-1}L^5\ll NQ^{-1}L^5$.

For $\tilde S^*$, let $\alpha\in\mathfrak K(q,a)$ with $q\le N^{1/2}$ and
$\beta=\alpha-a/q$.  If $q>Q$, then \eqref{eq:S-star-bound} gives
$|\tilde S^*(\alpha)|\ll N/q<N/Q$.  If $q\le Q$, then $\alpha\notin\Major(Q)$
forces $|q\beta|>Q/N$, and \eqref{eq:S-star-bound} gives
$|\tilde S^*(\alpha)|\ll(qN|\beta|)^{-1}N<N/Q$.  The second bound in
\eqref{eq:Xi-sup} follows.
\end{proof}

Corollary~\ref{cor:Xi} sharpens the only ingredient of \cite[Section~5]{BP}
that was restricted to $Q\le N^{3/7}$, and yields, as Br\"udern and
Perelli anticipated \cite[p.~610]{BP}, the same precision for $\nu=3$ as
their Theorem~3 gives for $\nu\ge4$.  This is Tolev's
\cite[Corollary~2]{Tolev}, which also follows from \cite{SP}; we include
the deduction to make precise where the factor $N^{\eps}$ survives.  Let
\begin{equation}\label{eq:singular-series}
  \mathfrak S_3(N):=\prod_{p^2\nmid N}\left(1-\frac1{(1-p^2)^3}\right)
  \prod_{p^2\mid N}\left(1-\frac1{(1-p^2)^2}\right)
\end{equation}
be the singular series \cite[(2)]{BP}.

\begin{corollary}[Br\"udern--Perelli, Schlage-Puchta, Tolev]\label{cor:r3}
Let $N$ be a positive integer, and assume the generalized Riemann
hypothesis for Dirichlet $L$-functions.  Then
\begin{equation}\label{eq:r3}
  r_3(N)=\frac12\left(\frac6{\pi^2}\right)^3\mathfrak S_3(N)N^2
  +O(N^{5/4+\eps}).
\end{equation}
\end{corollary}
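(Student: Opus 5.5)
We follow the circle-method argument of Br\"udern and Perelli \cite[Sections~4--5]{BP}, taking their treatment of the major arcs unchanged and using Corollary~\ref{cor:Xi} in place of \cite[Lemma~5 and (37)]{BP}. By orthogonality, $e^{-1}r_3(N)=\int_0^1\tilde S(\alpha)^3\e(-\alpha N)\,d\alpha$. We write $\tilde S=\tilde S^*+\Xi$ and expand the cube. The pure term $\int(\tilde S^*)^3\e(-\alpha N)\,d\alpha$ is evaluated exactly as in \cite{BP}: on each $\mathfrak K(q,a)$ one integrates $(N^{-1}-2\pi i\beta)^{-3}\e(-\beta N)$, completes the $\beta$-integral, and sums $G(q)^3$ times Ramanujan sums over $q$. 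This produces $\frac12(6/\pi^2)^3\mathfrak S_3(N)N^2e^{-1}$, with a tail error from $q>N^{1/2}$ and from completing the $\beta$-integrals. By \eqref{eq:G-bound} the tail is $\ll N^2\sum_{q>N^{1/2}}q^{-2}\cdot q\ll N^{3/2}$ or better, since $|G(q)|^3$ times a Ramanujan sum is summable.

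The remaining terms contain at least one factor $\Xi$, and we bound them by H\"older:
\[
  \int_0^1|\Xi|^3,\qquad \int_0^1|\Xi|^2|\tilde S^*|,\qquad \int_0^1|\Xi||\tilde S^*|^2 .
\]
The essential input is the GRH mean-square estimate of \cite{BP} for $\Xi$ on major arcs of level $Q$. This is the conditional ingredient; it gives roughly $\int_{\Major(Q)}|\Xi|^2\ll N^{1/2+\eps}Q^{c}$ for a suitable small power. Alongside it we use the unconditional bound $\int_0^1|\Xi|^2\ll\int|\tilde S|^2+\int|\tilde S^*|^2\ll N$, together with the minor-arc mean square of \cite{BGPVW}, $\int_{\Minor(Q)}|\tilde S|^2\ll N^{1+\eps}Q^{-1/2}$ or its analogue.

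On $\Minor(Q)$ we write
\[
  \int_{\Minor(Q)}|\Xi|^3\le\sup_{\Minor(Q)}|\Xi|\int_{\Minor(Q)}|\Xi|^2
\]
and insert Corollary~\ref{cor:Xi}, which gives $\sup_{\Minor(Q)}|\Xi|\ll NQ^{-1}L^5$ for every $Q\le\frac12N^{1/2}$. Taking $Q=\frac12N^{1/2}$, which is now allowed, the minor arcs contribute $\ll N^{1/2}L^5\cdot N^{3/4+\eps}$, that is, $N^{5/4+\eps}$. The major-arc piece is handled by the GRH mean square with the same $Q$ and gives $N^{5/4+\eps}$ as well. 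The cross terms involving $\tilde S^*$ are handled by \eqref{eq:S-star-bound} and Cauchy--Schwarz, again with $Q=\frac12N^{1/2}$.

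The main obstacle is choosing $Q$ so that both arcs balance at $N^{5/4}$. Previously, the restriction $Q\le N^{3/7}$ prevented the balance; Corollary~\ref{cor:Xi} removes that restriction. The factor $N^{\eps}$ then survives only through the GRH major-arc mean square and the mean-square estimate of \cite{BGPVW}, and not through our pointwise bound. This is the point to be made precise at the end of the proof.
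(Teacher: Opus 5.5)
Your minor-arc step at $Q_1=\tfrac12N^{1/2}$ is fine, provided the factor $N^{3/4+\eps}$ comes from a mean-square bound of that size, such as the GRH bound $\int_0^1|\Xi|^2\ll N^{3/4+\eps}$ of \cite[Lemma~4]{BP} that the paper uses. The unconditional $\int_0^1|\Xi|^2\ll N$ that you list would give only $N^{3/2}L^5$.

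\textbf{The gap is in the major arcs, which is where the real work of this corollary lies.} Your sentence ``the major-arc piece is handled by the GRH mean square with the same $Q$'' does not work. To pass from $\int_{\Major(Q_1)}|\Xi|^2$ to $\int_{\Major(Q_1)}|\Xi|^3$ you need a pointwise bound for $\Xi$ there. On $\Major(Q_1)$ only the trivial bound $|\Xi|\ll N$ is available, and it gives $N^{7/4+\eps}$.

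The missing idea is a dyadic decomposition. Cover $\Major(Q_1)$ by $\Major(1)$ and $O(L)$ shells $\Major(2Q)\setminus\Major(Q)$, $1\le Q\le\tfrac12Q_1$, and note that each shell lies in $\Minor(Q)$.
\begin{itemize}
\item On a shell, Corollary~\ref{cor:Xi} at level $Q$ gives $\sup|\Xi|\ll NQ^{-1}L^5$.
\item The GRH estimate $\int_{\Major(2Q)}|\Xi|^2\ll N^{\eps-1/2}Q^{5/2}$ then gives the contribution $N^{1/2+\eps}Q^{3/2}L^5\le N^{5/4+\eps}L^5$, precisely because $Q\le N^{1/2}$.
\item On $\Major(1)$, the trivial bound times $N^{\eps-1/2}$ gives $N^{1/2+\eps}$.
\end{itemize}
So Corollary~\ref{cor:Xi} is needed at every level up to $\tfrac14N^{1/2}$, in particular for $Q>N^{3/7}$, and not at a single balancing value of $Q$. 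The exact shape $N^{\eps-1/2}Q^{5/2}$ of the major-arc mean square is also essential. The shape $N^{1/2+\eps}Q^{c}$ you wrote would already give $N^{3/2+\eps}$ on $\Major(1)$.

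\textbf{The same defect affects your cross terms.} Since $|\tilde S^3-(\tilde S^*)^3|\ll|\tilde S^*|^2|\Xi|+|\Xi|^3$, they reduce to $E_4=\int_0^1|\tilde S^*|^2|\Xi|$. A single Cauchy--Schwarz gives only $(\int_0^1|\Xi|^2)^{1/2}(\int_0^1|\tilde S^*|^4)^{1/2}\ll N^{3/8+\eps}N^{3/2}$, which is too large. The bound $E_4\ll N^{5/4+\eps}$ again needs a decomposition by the level of the arcs together with the $Q$-dependent GRH estimate. It is best quoted from \cite[(42)]{BP}, as the paper does.

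\textbf{The main-term tail is also wrong as stated.} An error of size ``$\ll N^{3/2}$ or better'' would be fatal, and the sum you display, $N^2\sum_{q>N^{1/2}}q^{-1}$, diverges. Write $q=q_1q_2^2$ and use $|G(q)|\ll(q_1q_2)^{-2}$ from \eqref{eq:G-bound}. Then $\sum_{q>X}q|G(q)|^3\ll X^{-3/2}$, so the tail is $\ll N^2\cdot N^{-3/4}=N^{5/4}$; this too is part of \cite{BP}.
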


\begin{proof}
By \cite[(38), (42), (45)]{BP} and the evaluation of the main term on
\cite[p.~610]{BP},
\[
  e^{-1}r_3(N)=\frac{N^2}{2e}\left(\frac6{\pi^2}\right)^3\mathfrak S_3(N)
  +O(N^{5/4+\eps}+E_5),
  \qquad E_5=\int_0^1|\Xi(\alpha)|^3\,d\alpha,
\]
so it suffices to show $E_5\ll N^{5/4+\eps}$.  Under GRH,
\cite[Lemma~4]{BP} states that
\begin{equation}\label{eq:BP-lemma4}
  \int_{\Major(Q)}|\Xi(\alpha)|^2\,d\alpha\ll N^{\eps-1/2}Q^{5/2}
  \quad\left(1\le Q\le\tfrac12N^{1/2}\right),
  \qquad
  \int_0^1|\Xi(\alpha)|^2\,d\alpha\ll N^{3/4+\eps}.
\end{equation}
Put $Q_1=\frac12N^{1/2}$.  By Corollary~\ref{cor:Xi} and
\eqref{eq:BP-lemma4},
\[
  \int_{\Minor(Q_1)}|\Xi|^3
  \le\sup_{\Minor(Q_1)}|\Xi|\int_0^1|\Xi|^2
  \ll N^{1/2}L^5\cdot N^{3/4+\eps}\ll N^{5/4+\eps}.
\]
The set $\Major(Q_1)$ is covered by $\Major(1)$ and by $O(L)$ sets
$\Major(2Q)\setminus\Major(Q)$ with $1\le Q\le\frac12Q_1$.  Since
$\Major(2Q)\setminus\Major(Q)\subseteq\Minor(Q)$, Corollary~\ref{cor:Xi}
and \eqref{eq:BP-lemma4} give
\[
  \int_{\Major(2Q)\setminus\Major(Q)}|\Xi|^3
  \le\sup_{\Minor(Q)}|\Xi|\int_{\Major(2Q)}|\Xi|^2
  \ll\frac NQL^5\cdot N^{\eps-1/2}Q^{5/2}
  =N^{1/2+\eps}Q^{3/2}L^5\ll N^{5/4+\eps},
\]
while, by \eqref{eq:BP-lemma4} with $Q=1$ and the trivial bound
$|\Xi(\alpha)|\ll N$, which follows from $|\tilde S(\alpha)|\le N+1$ and
\eqref{eq:S-star-bound},
\[
  \int_{\Major(1)}|\Xi|^3\le\sup_{\Major(1)}|\Xi|\int_{\Major(1)}|\Xi|^2
  \ll N\cdot N^{\eps-1/2}=N^{1/2+\eps}.
\]
Summing over the $O(L)$ values of $Q$, with $\eps/2$ in place of $\eps$ in
\eqref{eq:BP-lemma4} and $L^6\ll_{\eps}N^{\eps/2}$, gives
$E_5\ll N^{5/4+\eps}$.
\end{proof}

The factor $N^{\eps}$ in \eqref{eq:r3} no longer comes from the pointwise
minor-arc estimate.  It enters through \eqref{eq:BP-lemma4}, which is used
both in the bound for $E_5$ above and in the estimate
$E_4\ll N^{5/4+\eps}$ of \cite[(42)]{BP}, where
$E_4=\int_0^1|\tilde S^*(\alpha)|^2|\Xi(\alpha)|\,d\alpha$ is the other
error term in \cite[(38)]{BP}.  In the proof of
\cite[Lemma~3]{BP} the mean square of $\Xi$ is expressed through Mellin
integrals of $L(s,\chi)/L(2s,\chi^2)$ on the line $\Re s=\frac14+\eps$,
GRH is used to bound the integrand there, and the dependence on $\eps$ is
recorded as $N^{\eps}$.  A version of \eqref{eq:BP-lemma4} with a power of
$\log N$ in place of $N^{\eps}$ would require a separate analysis of this
conditional mean-square estimate, which the present paper does not
provide.  Consequently, improving the pointwise estimate of
Theorem~\ref{thm:minor} alone cannot remove the factor $N^{\eps}$ from
\eqref{eq:r3}.

\section*{Acknowledgements}

The conception of the paper, the strategy of the proofs, and
the first complete proof are due to the authors.  The aim was to replace
the factor $N^{\eps}$ in the estimates of Schlage-Puchta and Tolev by
$(\log N)^A$ for some fixed $A>0$; the first version had $A=12$, with no
attempt at optimality.  The manuscript was then subjected to an adversarial proof audit with
the language model ChatGPT Sol~5.6 (OpenAI).  In the course of that audit
the model reduced the exponent to $A=6$, by sharpening the count of
representations $\ell=hpp'$ to $R(\ell)\le(\log\ell/\log P)^2$, whose
factor $(\log P)^{-2}$ cancels the $(\log P)^2$ arising from $A^{-2}$ in
\eqref{eq:A-lower}; a subsequent joint effort of the authors and the model
brought it to $A=5$, by summing the four terms of \eqref{eq:sieve-bound}
over the dyadic blocks separately in the two regimes $P=C_0L$ and
$P=D^2/N^{1/2}$ instead of bounding every block uniformly. The authors
did not pursue a further reduction of the exponent below $A=5$, since the
point of the paper -- that the factor $N^{\eps}$ can be replaced by a fixed
power of $\log N$ -- was already established.  The language
model Claude (Anthropic) was used for a further independent audit and to
improve the exposition of the final text.  All mathematical content was
verified by the authors, who take full responsibility for it.

\end{document}